\newtheorem{thm}{Theorem}[section]
\newtheorem{cor}[thm]{Corollary}
\newtheorem{lem}[thm]{Lemma}
\newtheorem{conj}[thm]{Conjecture}
\numberwithin{equation}{section}
\newcommand{\mad}{\mathrm{mad}}
\numberwithin{equation}{section}
\begin{document}
\title{Equitable vertex arboricity of graphs\thanks{This research is supported by National Natural Science Foundation of China (No. 10971121, 11201440).}}
\author{Jian-Liang Wu\unskip\textsuperscript{a}\thanks{Email address: jlwu@sdu.edu.cn}~, ~Xin Zhang\unskip\textsuperscript{a,b},  ~Hailuan Li\unskip\textsuperscript{a}\\[.5em]
{\small \textsuperscript{a}\unskip School of Mathematics, Shandong University, Jinan 250100, P. R. China}\\
{\small \textsuperscript{b}\unskip Department of Mathematics, Xidian University, Xi'an 710071, P. R. China}\\
}
\date{}
\maketitle

\begin{abstract}
\baselineskip  0.6cm
An equitable $(t,k,d)$-tree-coloring of a graph $G$ is a coloring to vertices of $G$ such that the sizes of any two color classes differ by at most one and the subgraph induced by each color class is a forest of maximum degree at most $k$ and diameter at most $d$. The minimum $t$ such that $G$ has an equitable $(t',k,d)$-tree-coloring for every $t'\geq t$ is called the strong equitable $(k,d)$-vertex-arboricity and denoted by $va^{\equiv}_{k,d}(G)$.
In this paper, we give sharp upper bounds for $va^{\equiv}_{1,1}(K_{n,n})$ and $va^{\equiv}_{k,\infty}(K_{n,n})$ by showing that $va^{\equiv}_{1,1}(K_{n,n})=O(n)$ and $va^{\equiv}_{k,\infty}(K_{n,n})=O(n^{\frac{1}{2}})$ for every $k\geq 2$.
It is also proved that $va^{\equiv}_{\infty,\infty}(G)\leq 3$ for every planar graph $G$ with girth at least 5 and $va^{\equiv}_{\infty,\infty}(G)\leq 2$ for every planar graph $G$ with girth at least 6 and for every outerplanar graph. We conjecture that $va^{\equiv}_{\infty,\infty}(G)=O(1)$ for every planar graph and  $va^{\equiv}_{\infty,\infty}(G)\leq \lceil\frac{\Delta(G)+1}{2}\rceil$ for every graph $G$.
\\[.5em]
\textbf{Keywords}: equitable coloring, $(k,d)$-tree-coloring, $(k,d)$-vertex-arboricity, complete bipartite graph, planar graph, outerplanar graph.

\end{abstract}

\baselineskip 0.6cm
\section{Introduction}
All graphs considered in the paper are finite, simple and undirected. We use $V(G)$, $E(G)$, $\delta(G)$ and $\Delta(G)$ to denote the set of vertices, the set of edges, the minimum degree and the maximum degree of $G$, respectively. $N_G(v)$ denotes the set of neighbors of a vertex $v$ in $G$ and $d_G(v)=|N_G(v)|$ denotes the degree of $v$. Sometimes we use $d(v)$ instead of $d_G(v)$ for brevity. A $k$-, $k^+$- and $k^-$-$vertex$ in $G$ is a vertex of degree $k$, at least $k$ and at most $k$, respectively. If $uv\in E(G)$ and $d(u)=k$, then we say that $u$ is a $k$-$neighbor$ of $v$; $k^-$-$neighbor$ and $k^+$-$neighbor$ can be similarly defined. For other undefined concepts we refer the reader to \cite{Bondy.2008}.

We associate positive integers $1,2,\cdots,t$ with colors, and call $f$ a $t$-$coloring$ of $G$ if $f$ is a mapping from $V(G)$ to $\{1,2,\cdots,t\}$. For $1\leq i\leq t$, let $V_i=\{v~|~f(v)=i\}$. A $t$-coloring $f$ of $G$ is $equitable$ if $||V_i|-|V_j||\leq 1$ for all $i$ and $j$, that is, every color class has size $\lfloor\frac{|V(G)|}{t}\rfloor$ or $\lceil\frac{|V(G)|}{t}\rceil$. A $t$-coloring of $G$ is $proper$ if every two adjacent vertices have the different colors. The smallest number $t$ such that $G$ has a proper equitable $t$-coloring, denoted by $\chi^{=}(G)$, is the $equitable$ $chromatic$ $number$. Note that a proper equitable $t$-colorable graph may admit no proper equitable $t'$-colorings for some $t'>t$. For example, the complete bipartite graph $H:=K_{2m+1,2m+1}$ has no proper equitable $(2m+1)$-colorings, although it satisfies $\chi^{=}(H)=2$. This fact motivates us to introduce another interesting parameter for proper equitable coloring. The $equitable$ $chromatic$ $threshold$ of $G$, denoted by $\chi^{\equiv}(G)$, is the smallest integer $t$ such that $G$ has proper equitable colorings for any number of colors greater than or equal to $t$. In 1970, Hajnal and Szemerédi \cite{Hajnal} answered a question of Erd\H{o}s by proving that every graph $G$ with $\Delta(G)\leq r$ has a proper equitable $(r+1)$-coloring. In fact, Hajnal-Szemerrédi Theorem implies $\chi^{\equiv}(G)\leq \Delta(G)+1$ for every graph $G$. In 2008, Kierstead and Kostochka \cite{Kierstead} simplified the proof of
Hajnal-Szemerrédi Theorem, and moreover, they \cite{KiersteadJCT} strengthened Hajnal-Szemerrédi Theorem by proving that $G$ has a proper equitable $(r+1)$-coloring if $G$ is a graph such that $d(x) + d(y)\leq 2r + 1$ for every edge $xy$.

Regarding equitable colorings, there are two well-known conjectures. Note that Conjecture \ref{CLW} is stronger than Conjecture \ref{ECC}.

\begin{conj}{\rm \cite{Meye}} \label{ECC}
For any connected graph $G$, except the complete graph and the odd cycle, $\chi^{=}(G)\leq \Delta(G)$.
\end{conj}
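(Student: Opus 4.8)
Since the statement is the Equitable Coloring Conjecture, which is still open in general, I can only lay out the natural line of attack, along which every known partial case proceeds, and point to where it stalls. The plan is a minimal-counterexample argument. Suppose $G$ is a counterexample with $|V(G)|=n$ minimum and, subject to that, $|E(G)|$ minimum; set $\Delta=\Delta(G)$. Since $G$ is connected, is neither complete nor an odd cycle, and fails $\chi^{=}(G)\le\Delta$, a direct check of $\Delta\le 2$ (a path has $\chi^{=}\le 2$, an even cycle has $\chi^{=}=2$, and $K_3$ together with the odd cycles are the listed exceptions) gives $\Delta\ge 3$. Two results supply partial footholds: by the Hajnal--Szemer\'edi theorem $G$ has an equitable $(\Delta+1)$-coloring --- the right balance with one colour too many --- and by Brooks' theorem $G$ has a proper $\Delta$-coloring --- the right number of colours, but generally unbalanced. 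The task is to interpolate between them: to rebalance a proper $\Delta$-coloring until every class has size $\lfloor n/\Delta\rfloor$ or $\lceil n/\Delta\rceil$.

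Next I would harvest the structural restrictions forced by minimality, aiming to confine $G$ to the dense, nearly regular regime in which $\Delta$ is comparable to $n$. Typical ones: $G$ is $2$-connected (otherwise equitably $\Delta$-colour the blocks and re-glue at cut vertices after a rebalancing step); $\delta(G)$ is bounded below in terms of $n/\Delta$ (deletion arguments are delicate, since equitability is global and the colour count is fixed, so one uses them to bound $\delta(G)$ rather than to pass directly to $G-v$); and, by the Kierstead--Kostochka edge condition quoted above with $r=\Delta-1$, $G$ must contain an edge between two vertices of degree $\Delta$, since otherwise $d(x)+d(y)\le 2\Delta-1$ on every edge and $G$ already has an equitable $\Delta$-coloring.

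The engine is the ``discharging on colorings'' method of Kierstead and Kostochka. Fix a proper coloring of $V(G)$ into exactly $\Delta$ classes minimizing a potential such as $\sum_i\binom{|V_i|}{2}$; if it is not equitable there is an over-full class $W$ and a deficient class. If $w\in W$ has a colour absent from $N(w)$ (a ``solo'' colour at $w$), recolouring $w$ shifts the imbalance, and chaining such recolourings builds the alternating ``$A$-components'' of Kierstead and Kostochka, along which one cascades a single vertex from $W$ to the deficient class. The heart of the argument is the dichotomy: either some cascade terminates, rebalancing the coloring and contradicting the choice of $G$, or the terminal configuration is so rigid --- every relevant vertex blocked by the solo colours of its neighbours --- that $G$ would have to contain $K_{\Delta+1}$ or be an odd cycle, again a contradiction.

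The hard part, and the reason the conjecture is unsolved, is exactly this last step: controlling the terminal configurations globally rather than one move at a time. A single vertex can always be moved, but $W$ and the deficient class may be mutually blocked by an expanding pattern of solo colours, so that no individual alternating move decreases the imbalance, and there is no known way to exclude this in general. Each proven case --- $\Delta\le 3$, bipartite graphs, outerplanar graphs, planar graphs with large $\Delta$, graphs with $\Delta(G)\ge n/2$ --- is handled by adding a hypothesis that makes the Kierstead--Kostochka cascade provably terminate or that rules out the rigid terminal configurations outright. A full proof would need either a genuinely global argument producing a terminating cascade in an arbitrary dense near-regular graph, or a complete classification of the rigid terminal configurations showing that each collapses to one of the two stated exceptions; I expect that classification to be the principal obstacle.
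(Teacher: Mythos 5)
The statement you were asked to prove is Conjecture \ref{ECC} of the paper, Meyer's Equitable Coloring Conjecture, which the authors merely quote from the literature as motivation; the paper contains no proof of it, and the conjecture is open. Your submission is therefore not comparable to a proof in the paper, and, as you yourself say, it is not a proof at all but a survey of the standard line of attack. The individual ingredients you cite are used correctly: the reduction to $\Delta\ge 3$, the two endpoints given by Brooks' theorem (a proper but unbalanced $\Delta$-coloring, legitimate here since $G$ is neither complete nor an odd cycle) and by Hajnal--Szemer\'edi (a balanced coloring with $\Delta+1$ classes), the observation via the Kierstead--Kostochka Ore-type theorem with $r=\Delta-1$ that a minimal counterexample must contain an edge joining two vertices of degree $\Delta$, and the recoloring-cascade machinery with a potential such as $\sum_i\binom{|V_i|}{2}$.

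The genuine gap is exactly the one you name: after setting up the alternating recoloring components, one must either show that some cascade terminates or classify the rigid terminal configurations in which the over-full and deficient classes block each other through solo colours, and show that such rigidity forces $K_{\Delta+1}$ or an odd cycle. No argument is offered for this step, and none is known; every proved case ($\Delta\le 3$, bipartite, outerplanar, planar with $\Delta\ge 9$, $\Delta\ge |V(G)|/3+1$, etc.) imports an extra hypothesis precisely to make that step go through. So your write-up is an accurate and honest description of the state of the art, but it establishes nothing beyond what is already known, and it could not have matched a proof in the paper because the paper (correctly) presents the statement only as a conjecture.
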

\begin{conj}\label{CLW} {\rm \cite{C-L-W}}
For any connected graph $G$, except the complete graph, the odd cycle and the complete bipartite graph $K_{2m+1,2m+1}$, $\chi^{\equiv}(G)\leq \Delta(G)$.
\end{conj}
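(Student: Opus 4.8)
The plan is to treat the Chen--Lih--Wu conjecture by the standard minimal-counterexample strategy for equitable colorings, using the Hajnal--Szemer\'edi theorem and its Kierstead--Kostochka refinement (both quoted above) as the engine that upgrades an equitable coloring of a proper subgraph to one of $G$. So suppose $G$ is a connected graph, distinct from $K_n$, from every odd cycle, and from every $K_{2m+1,2m+1}$, and minimal (in $|V(G)|$) subject to failing $\chi^{\equiv}(G)\le\Delta(G)$; fix a bad number of colors $t\ge\Delta(G)$. One must keep in mind that, unlike the ordinary equitable chromatic number $\chi^{=}$, here the range of $t$ matters in both directions: the introduction's example shows $K_{2m+1,2m+1}$ has $\chi^{=}=2$ but is not equitably $(2m+1)$-colorable, so being equitably colorable for one value of $t$ forces nothing about larger values. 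Hence the argument must split into the threshold value $t=\Delta(G)$ and the range $t>\Delta(G)$, and the three excluded families must enter essentially in the analysis rather than being swept aside by a reduction.

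Next I would extract structure from minimality. If $\delta(G)<\Delta(G)$, pick a vertex $v$ of minimum degree: by minimality $G-v$ (after handling the routine cases where it falls into an excluded family or becomes disconnected) has a proper equitable $t$-coloring; since $v$ forbids at most $\Delta(G)-1<t$ colors there is a free colour for $v$, and a single Hajnal--Szemer\'edi-type alternating-path swap restores the class sizes to within one of each other, contradicting the choice of $G$. Thus $G$ is regular, $\Delta(G)$-regular, and this is the real content. Here one cannot afford to waste a colour: one colours $G-v$ equitably into $t$ classes, observes that $v$ meets all $t$ colours, and tries to free one of them for $v$ by a double Kempe-chain / alternating-path exchange that simultaneously keeps all class sizes within one of each other. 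The combinatorics of when such an exchange is blocked should be controlled by a potential function on the "bad'' colour classes, in the style of the Kierstead--Kostochka proof, and the blocked configurations ought to collapse precisely to $K_n$, $C_{2k+1}$, and $K_{2m+1,2m+1}$.

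The step I expect to be the genuine obstacle --- and the reason this remains a conjecture --- is the regular case, and within it the need to run the exchange argument robustly for \emph{every} $t\ge\Delta(G)$ at once. The Hajnal--Szemer\'edi machinery is calibrated to a single value, $t=\Delta(G)+1$; shaving it to $t=\Delta(G)$ while keeping it valid for all larger $t$ is known to fail without extra hypotheses, and bridging that gap seems to require a genuinely new idea. A realistic intermediate goal, and the one I would actually carry out in detail, is the conjecture for restricted classes where discharging supplies the missing structural input: planar graphs of large girth, $K_4$-minor-free graphs, graphs of bounded treewidth, and graphs with $\Delta(G)$ either small (say $\Delta(G)\le 3$) or large relative to $|V(G)|$. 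In each of these the blocked-exchange configurations can be enumerated by a reducible-configuration and discharging analysis, after which the equitable coloring is built greedily class by class; pushing such a case analysis to all graphs is exactly where the difficulty lies.
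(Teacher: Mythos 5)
This statement is the Chen--Lih--Wu conjecture, quoted by the paper from \cite{C-L-W}; the paper offers no proof of it (it is used only as motivation and background), so the honest benchmark is that the statement is open. Your proposal, read as a proof, has a genuine gap --- indeed you name it yourself: the whole weight of the argument falls on the $\Delta(G)$-regular case, where one must free a colour for the last vertex by Kempe-chain/alternating-path exchanges while keeping all class sizes balanced, and must do so for \emph{every} $t\geq\Delta(G)$ simultaneously. You assert that the blocked configurations ``ought to collapse'' to $K_n$, odd cycles and $K_{2m+1,2m+1}$, but no potential function is defined, no exchange lemma is stated or proved, and the Hajnal--Szemer\'edi/Kierstead--Kostochka machinery you invoke is calibrated to $\Delta(G)+1$ colours; shaving it to $\Delta(G)$ is exactly the open problem, not a step one can cite.

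The ``routine'' reductions are also not routine. Minimality arguments for $\chi^{\equiv}$ are delicate precisely because equitable $t$-colorability is not monotone in $t$ (the paper's own example $K_{2m+1,2m+1}$ shows this), so ``fix a bad $t\geq\Delta(G)$'' and induct is not automatically sound. Deleting a minimum-degree vertex $v$ may disconnect $G$ or leave a graph in an excluded family, and equitable colorings of components do not combine into an equitable coloring of the union, since the balance condition is global; your parenthetical ``after handling the routine cases'' hides real work. Finally, the claim that ``a single Hajnal--Szemer\'edi-type alternating-path swap restores the class sizes'' after colouring $v$ is an unproved assertion --- rebalancing while preserving properness is the entire technical content of those theorems. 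In short: your diagnosis of where the difficulty lies is accurate and matches why the statement remains a conjecture, but the proposal is a research programme, not a proof.
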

\noindent


The above two conjectures have been confirmed for many classes of graphs, such as graphs with $\Delta\leq 3$ \cite{C-L-W,Chen} or $\Delta\geq \frac{|V(G)|}{3}+1$ \cite{C-L-W,Chen,Yap}, bipartite graphs \cite{Lih.bip}, outerplanar graphs \cite{Yap}, series-parallel graphs \cite{ZW} and planar graphs with $\Delta\geq 9$ \cite{K,Yap-planar}. There are other related results, see \cite{Wang,Zhu}.


In \cite{Fan}, Fan, Kierstead, Liu, Molla, Wu and Zhang first considered relaxed equitable coloring of graphs. They proved that every graph has an equitable $\Delta$-coloring such that each color class induces a forest with maximum degree at most one. On the basis of this research, we aim to introduce the notion of equitable $(t,k,d)$-tree-coloring. A $t$-coloring $f$ of a graph $G$ is a $(t,k,d)$-$tree$-$coloring$ of $G$ if each component of $G[V_i]$ is a tree of maximum degree at most $k$ and diameter at most $d$. Sometimes, a $(t,\infty,\infty)$-tree-coloring is called a $t$-tree-coloring for short. The $(k,d)$-$vertex$ $arboricity$ of $G$, denoted by $va_{k,d}(G)$, is the minimum $t$ such that $G$ has a $(t,k,d)$-tree-coloring. Indeed, the notion of $(t,k,d)$-tree-coloring is a uniform form of some familiar kinds of vertex coloring. For example, it is obvious that $va_{0,0}(G)=\chi(G)$, $va_{2,\infty}(G)=vla(G)$ and $va_{\infty,\infty}(G)=va(G)$, where $\chi(G)$ is the standard chromatic number, $vla(G)$ is the vertex linear arboricity and $va(G)$ is the vertex arboricity of $G$. It is also trivial that $va_{k,d}(K_{m,n})=2$ for complete bipartite graph $K_{m,n}$ and integers $k,d\geq 0$. In \cite{Chartrand}, it was prove that the set of vertices of every planar graph can be partitioned into three subsets such that each subset induces a forest. This implies $va_{\infty,\infty}(G)\leq 3$ for every planar graph $G$.

An $equitable$ $(t,k,d)$-$coloring$ is a $(t,k,d)$-coloring that is equitable. The $equitable$ $(k,d)$-$vertex$ $arboricity$ of a graph $G$, denoted by $va^{=}_{k,d}(G)$, is the smallest $t$ such that $G$ has an equitable $(t,k,d)$-tree-coloring. The $strong$ $equitable$ $(k,d)$-$vertex$ $arboricity$ of $G$, denoted by $va^{\equiv}_{k,d}(G)$, is the smallest $t$ such that $G$ has an equitable $(t',k,d)$-coloring for every $t'\geq t$. It is clear that $va^{=}_{0,0}(G)=\chi^{=}(G)$ and $va^{\equiv}_{0,0}(G)=\chi^{\equiv}(G)$ for every graph $G$. In view of this, for a graph $G$, $va^{=}_{k,d}(G)$ and $va^{\equiv}_{k,d}(G)$ may vary a lot.


In Section 2, we investigate the strong equitable $(k,d)$-vertex arboricity of the complete bipartite graph $K_{n,n}$ by showing that $va^{\equiv}_{1,1}(K_{n,n})=O(n)$ and $va^{\equiv}_{k,\infty}(K_{n,n})=O(n^{\frac{1}{2}})$ for every $k\geq 2$. In Section 3, we consider planar graphs and prove that $va^{\equiv}_{\infty,\infty}(G)=O(1)$ for every planar graph with girth at least 5.


\section{Complete bipartite graphs}


\begin{lem}\label{lem:even}
The complete bipartite graph $K_{n,n}$ has an equitable $(t,k,d)$-tree-coloring for every even integer $t\geq 2$.
\end{lem}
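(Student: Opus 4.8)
The plan is to use the evenness of $t$ to decouple the two sides of the bipartition, so that every color class is a monochromatic independent set and the forest, degree and diameter conditions become vacuous. Write $t = 2s$, and let $X$ and $Y$ be the two parts of $K_{n,n}$, each of size $n$. First I would partition $X$ into $s$ blocks $X_1, \dots, X_s$ of sizes as equal as possible, so $|X_i| \in \{\lfloor n/s \rfloor, \lceil n/s \rceil\}$ for each $i$ (with the understanding that some blocks are empty when $s > n$), and partition $Y$ analogously into $Y_1, \dots, Y_s$. Then I would assign color $2i-1$ to all of $X_i$ and color $2i$ to all of $Y_i$, for $1 \le i \le s$.

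To finish, I would verify the two requirements. Each color class lies inside $X$ or inside $Y$, hence spans no edge of $K_{n,n}$, so it induces a forest of maximum degree $0 \le k$ and diameter $0 \le d$; thus the coloring is a $(t,k,d)$-tree-coloring for every $k$ and $d$. For equitability, every one of the $2s$ color classes has size $\lfloor n/s \rfloor$ or $\lceil n/s \rceil$ by construction, so any two of them differ in size by at most one. A short bookkeeping check confirms consistency with the definition of an equitable coloring: writing $n = qs + r$ with $0 \le r < s$, there are exactly $2r$ classes of size $q+1$ and $2s - 2r$ classes of size $q$, which is precisely how $2n = 2qs + 2r$ vertices must be distributed over $t = 2s$ equitable classes.

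There is no genuine obstacle in this lemma; it is the easy half of the analysis, and its purpose is to reduce the problem of producing equitable $(t,k,d)$-tree-colorings of $K_{n,n}$ to the case of odd $t$, which is where the real work — and the bounds on $va^{\equiv}_{k,d}(K_{n,n})$ — will come from. The one point to be careful about is not over-claiming: this construction yields independent color classes only, so it offers no help for an odd number of colors, where at least one class must meet both $X$ and $Y$ and the induced forest has to be controlled in earnest.
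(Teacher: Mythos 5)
Your construction is correct and is essentially the paper's own proof: both split each partite set into $t/2$ equitable classes and give each class its own color, making every color class an independent set of size $\lfloor n/(t/2)\rfloor$ or $\lceil n/(t/2)\rceil$. The extra bookkeeping you include (counting classes of each size and the remark about odd $t$) is fine but adds nothing beyond the paper's one-line argument.
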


\begin{proof}
One can easily construct an equitable $(t, k, d)$-tree-coloring of $K_{n,n}$ by dividing each partite set into $t/2$ classes equitably and coloring the vertices of each class with one color.
\end{proof}

\begin{thm}\label{thm:main1}
If $K_{n,n}$ is a complete bipartite graph and $k\geq 2$, then  $va^{\equiv}_{1,1}(K_{n,n})\leq 2\lfloor\frac{n+1}{3}\rfloor$, and furthermore, this bound is sharp.
\end{thm}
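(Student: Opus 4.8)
Fix the bipartition $(X,Y)$ of $K_{n,n}$ and set $N=\lfloor\frac{n+1}{3}\rfloor$. The key structural remark is that a color class $V_i$ of a $(t,1,1)$-tree-coloring of $K_{n,n}$ induces $K_{|V_i\cap X|,\,|V_i\cap Y|}$, and this graph has all its components equal to $K_1$ or $K_2$ if and only if $V_i\subseteq X$, or $V_i\subseteq Y$, or $V_i$ consists of exactly one vertex of $X$ and one of $Y$ (call the last type an \emph{edge class}). So an equitable $(t,1,1)$-tree-coloring of $K_{n,n}$ is precisely an equitable partition of $X\cup Y$ into classes of these three types. To prove $va^{\equiv}_{1,1}(K_{n,n})\le 2N$ I exhibit such a partition for every $t'\ge 2N$. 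For $t'$ even this is Lemma~\ref{lem:even}, so suppose $t'=2r+1$ is odd; then $t'\ge 2N+1$, and since $N\ge\frac{n-1}{3}$ we get $3t'=3(2r+1)\ge 6N+3\ge 2n+1$, and in particular $t'>\frac{2n}{3}$.

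I then split into two ranges according to whether $t'\ge n$. If $n\le t'\le 2n$, the permitted class sizes are $1$ and $2$; take $2n-t'$ edge classes $\{x_i,y_i\}$ and make the remaining $2(t'-n)$ vertices into singletons, for a total of $t'$ classes (a perfect matching when $t'=n$, all singletons when $t'=2n$). If $t'>2n$, use $2n$ singletons and $t'-2n$ empty classes. If instead $\frac{2n}{3}<t'<n$, then $\lfloor\frac{2n}{t'}\rfloor=2$, so the permitted sizes are $2$ and $3$ and edge classes are allowed; I use one edge class $\{x_1,y_1\}$ and partition each of $X\setminus\{x_1\}$ and $Y\setminus\{y_1\}$ into $\frac{t'-1}{2}$ classes of size $2$ or $3$. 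Such a partition of an $(n-1)$-set exists precisely when $2\cdot\frac{t'-1}{2}\le n-1\le 3\cdot\frac{t'-1}{2}$; the left inequality is $t'\le n$ and the right one rearranges to $3t'\ge 2n+1$, both of which hold. In every case all classes differ in size by at most one and are of an allowed type, which gives the upper bound.

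For sharpness I use the infinite family $n\equiv 2\pmod 3$, say $n=3m+2$, for which $2N=2m+2$, and I claim $K_{n,n}$ has no equitable $(2m+1,1,1)$-tree-coloring. Here $2n=6m+4$ and $\lfloor\frac{6m+4}{2m+1}\rfloor=3$, so every class would have size $3$ or $4$; in particular no class can be an edge class, so by the structural remark each of the $2m+1$ classes lies inside $X$ or inside $Y$. If $\alpha$ of them lie in $X$ and $\beta$ in $Y$, then $3\alpha\le|X|=3m+2$ gives $\alpha\le m$ and likewise $\beta\le m$, whence $2m+1\le\alpha+\beta\le 2m$, a contradiction. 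Hence $va^{\equiv}_{1,1}(K_{3m+2,3m+2})>2m+1$, and together with the upper bound this yields $va^{\equiv}_{1,1}(K_{3m+2,3m+2})=2m+2=2\lfloor\frac{n+1}{3}\rfloor$.

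The only genuinely delicate ingredient is the numerical fact $3N\ge n-1$: it is exactly what guarantees that every odd $t'$ at or above the claimed threshold satisfies $3t'\ge 2n+1$, which simultaneously makes the size-$\{2,3\}$ construction feasible and, at $t'=2N-1$, forces the counting obstruction above; everything else is routine verification of the equitability condition.
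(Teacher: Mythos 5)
Your proposal is correct and follows essentially the same strategy as the paper: reduce to odd $t'$ via Lemma~\ref{lem:even}, build the odd-case coloring from an edge class together with within-side classes of sizes $\{2,3\}$ (or $\{1,2\}$ when $t'\geq n$), and establish sharpness for $n\equiv 2\pmod 3$ by noting that classes of size at least $3$ must lie inside one partite set and then counting. The only cosmetic differences are that the paper uses $3q-2n$ edge classes with size-$3$ within-side classes where you use a single edge class with size-$2$/$3$ parts, and your explicit treatment of $t'\geq 2n$ via singletons and empty classes is if anything slightly more careful than the paper's.
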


\begin{proof}
 By Lemma \ref{lem:even}, in order to show $va^{\equiv}_{1,1}(K_{n,n})\leq 2\lfloor\frac{n+1}{3}\rfloor$, we only need to prove that $K_{n,n}$ has an equitable $(q,1,1)$-tree-coloring for every odd $q\geq 2\lfloor\frac{n+1}{3}\rfloor+1$. Note that $3q-2n\geq 6\lfloor\frac{n+1}{3}\rfloor+3-2n\geq 6\times\frac{n-1}{3}+3-2n\geq 1$. Let $X$ and $Y$ be the partite sets of $K_{n,n}$ and let $e=xy$ be an edge of $K_{n,n}$ with $x\in X$ and $y\in Y$. If $q\geq n$, then color $x$ and $y$ with 1, divide each of $X\backslash\{x\}$ and $Y\backslash\{y\}$ into $\frac{q-1}{2}$ classes equitably and color the vertices of each class with a color in $\{2,\cdots,q\}$. One can easily check that the resulting coloring is an equitable $(q,1,1)$-tree-coloring of $K_{n,n}$ with the size of each color class being at most 2. Thus, we assume $q<n$. Suppose $2n=aq+r$, where $0\leq r\leq a-1$. Since $a=\frac{2n-r}{q}\leq \frac{2n}{q}\leq \frac{2n}{2\lfloor\frac{n+1}{3}\rfloor+1}<3$, $a\leq 2$. Now arbitrarily choose $3q-2n$ vertex-disjoint edges from $K_{n,n}$ and color the two end-vertices of each edge with a color in $\{1,\cdots,3q-2n\}$. Let $X'$ and $Y'$ be the uncolored vertices in $X$ and $Y$, respectively. One can see that $|X'|=|Y'|=n-(3q-2n)=3(n-q)>0$. Thus, we can divide each of $X'$ and $Y'$ into $n-q$ classes equitably and color the vertices of each class with a color in $\{3q-2n+1,\cdots,q\}$. It is also easy to check that the resulting coloring of $K_{n,n}$ is an equitable $(q,1,1)$-tree-coloring with the size of each color class being either 2 or 3. Hence $va^{\equiv}_{1,1}(K_{n,n})\leq 2\lfloor\frac{n+1}{3}\rfloor$.
To show this bound is sharp, we investigate the graph $G:=K_{n,n}$ with $n=3t+2$. If $G$ has an equitable $(2t+1,1,1)$-tree-coloring $c$, then the size of every color class in $c$ is at least 3 because $\lceil\frac{2n}{2t+1}\rceil=\lceil\frac{6t+4}{2t+1}\rceil\geq 4$. This implies that there is no edge in $G$ with its two end-vertices colored with the same color. Thus the vertices of every color class forms an independent set. Without loss of generality, suppose there are at least $t+1$ colors appearing in $X$. We then have $|X|\geq 3(t+1)=(3t+2)+1=|X|+1$, a contradiction. This implies $va^{\equiv}_{1,1}(G)\geq 2t+2=2\lfloor\frac{n+1}{3}\rfloor$ and thus $va^{\equiv}_{1,1}(G)=2\lfloor\frac{n+1}{3}\rfloor$.
\end{proof}

In the following we investigate the strong equitable $(\infty, k)$-vertex arboricity of $K_{n,n}$, where $k\geq 2$.
One can see that the diameter of every induced forest in $K_{n,n}$ is at most 2, so an equitable $(\infty, k)$-tree-coloring of $K_{n,n}$ is equivalent to an equitable $(\infty, 2)$-tree-coloring of $K_{n,n}$, that is, $va^{\equiv}_{\infty,k }(K_{n,n})=va^{\equiv}_{\infty, 2}(K_{n,n})$.


Let $K_{n,n}$ be a complete bipartite graph with two partite sets $X$ and $Y$. For a partial $q$-coloring $c$ (not needed to be proper) of $K_{n,n}$, let $V_1,\cdots,V_q$ be its color classes, $a=\lfloor\frac{2n}{q}\rfloor$ and let
\begin{align*}
c(X_1)& =\{V_i~|~|V_i\cap X|=a+1, |V_i\cap Y|=0\},
c(X_2) =\{V_i~|~|V_i\cap X|=a, |V_i\cap Y|=0\},\\
c(X'_1)& =\{V_i~|~|V_i\cap X|=a, |V_i\cap Y|=1\},
c(X'_2) =\{V_i~|~|V_i\cap X|=a-1, |V_i\cap Y|=1\},\\
c(Y_1)& =\{V_i~|~|V_i\cap Y|=a+1, |V_i\cap X|=0\},
c(Y_2) =\{V_i~|~|V_i\cap Y|=a, |V_i\cap X|=0\},\\
c(Y'_1)& =\{V_i~|~|V_i\cap Y|=a, |V_i\cap X|=1\},
c(Y'_2) =\{V_i~|~|V_i\cap Y|=a-1, |V_i\cap X|=1\}.
\end{align*}
We have the following lemma.

\begin{lem}\label{lem:eq.iff}
If $K_{n,n}$ is a complete bipartite graph with partite sets $X$ and $Y$, where $2n=aq+r$ and $0\leq r\leq a-1$, and $c$ is a  partial $q$-coloring of $K_{n,n}$, then
$c$ is an equitable $(q,\infty, 2)$-tree-coloring of $K_{n,n}$ if and only if
\begin{align}
& (a+1)|c(X_1)|+a|c(X_2)|+a|c(X'_1)|+(a-1)|c(X'_2)|+|c(Y'_1)|+|c(Y'_2)|=n,\label{eq:2}\\
& (a+1)|c(Y_1)|+a|c(Y_2)|+a|c(Y'_1)|+(a-1)|c(Y'_2)|+|c(X'_1)|+|c(X'_2)|=n.\label{eq:3}
\end{align}
\end{lem}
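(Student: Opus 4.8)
The plan is to translate both requirements hidden in the phrase ``equitable $(q,\infty,2)$-tree-coloring'' into purely numerical conditions on the pairs $(x_i,y_i):=(|V_i\cap X|,|V_i\cap Y|)$, and then to observe that \eqref{eq:2} and \eqref{eq:3} are nothing but the two identities ``$\sum_i x_i=n$'' and ``$\sum_i y_i=n$'' rewritten in terms of the eight sets $c(X_1),\dots,c(Y'_2)$.

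The key structural observation is that, since $G[V_i]$ is the complete bipartite graph $K_{x_i,y_i}$, this subgraph is a forest all of whose components have diameter at most $2$ \emph{if and only if} $\min(x_i,y_i)\le 1$: if $x_i\ge 2$ and $y_i\ge 2$ then $G[V_i]\supseteq C_4$, while if $\min(x_i,y_i)\le 1$ then $G[V_i]$ is a disjoint union of (at most one) star and isolated vertices, hence a forest of diameter at most $2$ (the diameter being automatically at most $2$ for any forest inside $K_{n,n}$, as noted before the lemma). So $c$ is a $(q,\infty,2)$-tree-coloring exactly when $\min(x_i,y_i)\le 1$ for all $i$. Separately, $c$ is equitable exactly when every color class has size $a$ or $a+1$ (these being $\lfloor 2n/q\rfloor$ and $\lceil 2n/q\rceil$ since $2n=aq+r$ with $0\le r$). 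Combining the two, every color class of an equitable $(q,\infty,2)$-tree-coloring has $(x_i,y_i)$ equal to one of $(a+1,0),(a,0),(a,1),(a-1,1),(0,a+1),(0,a),(1,a),(1,a-1)$, i.e.\ it lies in exactly one of $c(X_1),c(X_2),c(X'_1),c(X'_2),c(Y_1),c(Y_2),c(Y'_1),c(Y'_2)$.

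For the forward direction I would then simply count: $n=|X|=\sum_{i=1}^{q}x_i$, and grouping the classes by which of the eight sets they belong to (those in $c(Y_1),c(Y_2)$ contribute $0$, those in $c(Y'_1),c(Y'_2)$ contribute $1$ each, those in $c(X_1)$ contribute $a+1$, and so on) yields precisely \eqref{eq:2}; the symmetric count of $|Y|$ gives \eqref{eq:3}. For the converse, note that the left-hand side of \eqref{eq:2} counts, with the correct multiplicity $x_i$, the $X$-vertices lying in a class of one of the eight types, hence is at most $\sum_i x_i\le |X|=n$; equality forces every $X$-vertex — and by \eqref{eq:3} every $Y$-vertex — to be colored and to lie in a class of one of the eight types. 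Since there are $q$ color classes, each of one of those types, every class has size $a$ or $a+1$ and induces a star forest, and the number of classes of size $a+1$ is then forced to be $r$; thus $c$ is an equitable $(q,\infty,2)$-tree-coloring.

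The main technical nuisance — and the step I would be most careful about — is the degenerate regime in which the eight pairs above fail to be pairwise distinct (for example, when $a\le 2$ a class with $x_i=y_i=1$ satisfies the defining conditions of two of the sets). There one must either fix a convention assigning each class to a single set, or dispose of the small values of $a$ directly, so that the grouping in the counting argument is a genuine partition and the coefficients in \eqref{eq:2}--\eqref{eq:3} are exactly the numbers $x_i$ (resp.\ $y_i$). A secondary point worth recording is that ``$q$-coloring'' is taken here to use all $q$ colors, which is what upgrades ``all sizes lie in $\{a,a+1\}$ and all vertices are colored'' to genuine equitability in the converse direction.
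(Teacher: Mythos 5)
Your proposal follows essentially the same route as the paper's own proof: equitability is rephrased as ``every class has size $a$ or $a+1$'', the tree/diameter condition is rephrased as $\min\{|V_i\cap X|,|V_i\cap Y|\}\le 1$ (no $C_4$ inside a class), and \eqref{eq:2}--\eqref{eq:3} are then just the counts of $X$- and $Y$-vertices grouped by the eight class types. You are in fact more scrupulous than the paper on one point: when $a\le 2$ the eight types are not pairwise disjoint (e.g.\ a class with one vertex on each side lies in two of them), so the grouping is not automatically a partition; the paper's proof silently assumes disjointness, while you at least flag that a convention or a separate treatment of small $a$ is needed.

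The step I cannot accept as written is the converse clause ``since there are $q$ color classes, each of one of those types.'' Equality in \eqref{eq:2} and \eqref{eq:3} does force every vertex to be colored and to lie in a class belonging to one of the eight types, but it does not force all $q$ classes to be nonempty. Concretely, take $K_{10,10}$ with $q=5$, so $a=4$ and $r=0$: color $X$ with colors $1,2$ (five vertices each), color $Y$ with colors $3,4$ (five vertices each), and never use color $5$. Then $|c(X_1)|=|c(Y_1)|=2$ and all other counts vanish, so both \eqref{eq:2} and \eqref{eq:3} hold, yet the coloring is not equitable (class sizes $5,5,5,5,0$). So the assumption you record as a ``secondary point'' --- that all $q$ colors are used --- is not a harmless convention but a genuinely needed hypothesis, and it conflicts with the lemma's own wording ``partial $q$-coloring.'' To be fair, the paper's converse makes exactly the same unjustified assertion; the statement is only applied (in Lemma \ref{lem:geq t+1} and in Section 4) with prescribed values of the eight counts summing to exactly $q$, where no class is empty and the argument is sound. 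A clean fix is to prove the converse under the additional hypothesis that the eight counts sum to $q$ (equivalently, that no color class is empty), and to check that hypothesis in each application.
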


\begin{proof}
Let $V_1,\cdots,V_q$ be the color classes of $c$. First suppose that $c$ is an equitable $(q,\infty,2)$-tree-coloring of $K_{n,n}$. Since $2n=aq+r$, the size of each color class of $c$ is either $a$ or $a+1$.
It is easy to see that $\min\{|V_i\cap X|,|V_i\cap Y|\}\leq 1$ for every $1\leq i\leq q$, because otherwise we would find a 4-cycle in some color class $V_i$, a contradiction. Thus
\begin{align}
c(X_1)\cup c(X_2)\cup c(X'_1)\cup c(X'_2)\cup c(Y_1)\cup c(Y_2)\cup c(Y'_1)\cup c(Y'_2)=\bigcup_{i=1}^q V_i \label{cup}
\end{align}
and the equations (\ref{eq:2}) and (\ref{eq:3}) hold accordingly. On the other hand, if equations (\ref{eq:2}) and (\ref{eq:3}) hold, then $c$ is a $q$-coloring of $K_{n,n}$ and the size of each color class of $c$ is either $a$ or $a+1$. Furthermore, we also have $\min\{|V_i\cap X|,|V_i\cap Y|\}\leq 1$ for every $1\leq i\leq q$. Hence $c$ is an equitable $(q,\infty,2)$-tree-coloring of $K_{n,n}$.
\end{proof}

\begin{lem}\label{lem:geq t+1}
The complete bipartite graph $K_{n,n}$ with $t(t+3)\leq 2n< (t+1)(t+4)$ has an equitable $(q,\infty,2)$-tree-coloring for every integer $q\geq 2\lfloor\frac{t+1}{2}\rfloor$.
\end{lem}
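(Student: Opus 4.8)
The plan is to split on the parity of $q$. For even $q$ there is nothing new: since $q\ge 2\lfloor\frac{t+1}{2}\rfloor\ge 2$, Lemma~\ref{lem:even} already produces an equitable $(q,\infty,2)$-tree-coloring of $K_{n,n}$, as any coloring all of whose classes are independent sets is in particular a $(q,\infty,2)$-tree-coloring. So assume $q=2\ell+1$ is odd. Since $q$ is odd and $q\ge 2\lfloor\frac{t+1}{2}\rfloor$, one first checks that $q\ge t+1$, that the equality $q=t+1$ forces $t$ even, and that in that borderline situation, if $a:=\lfloor\frac{2n}{q}\rfloor$ equals $t+3$ then $r:=2n-aq$ is odd (since $2n$ is even while $(t+3)(t+1)$ is odd, so $1\le r\le t-1$). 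I record these facts now; the oddness of $r$ is exactly what will rescue the tightest case. Note also that $aq=2n-r\le 2n<(t+1)(t+4)$ forces $a\le t+3$, that $r\equiv a\pmod 2$ (so $a+r$ is even), and that $n=a\ell+\frac{a+r}{2}$. Apart from the cheap cases $a\le 2$ (where $q>n$ and one simply uses $r$ disjoint pairs together with singletons), we may also assume $a\ge 3$, so that every color class of every candidate coloring is unambiguously of \emph{$X$-type} ($\le 1$ vertex in $Y$) or \emph{$Y$-type} ($\le 1$ vertex in $X$), and the eight families $c(X_1),\dots,c(Y_2')$ are pairwise disjoint.

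By Lemma~\ref{lem:eq.iff} it suffices to exhibit non-negative integers $|c(X_1)|,\dots,|c(Y_2')|$ summing to $q$, with exactly $r$ of the corresponding classes of size $a+1$, and satisfying \eqref{eq:2} (whereupon \eqref{eq:3} is automatic, as the two left-hand sides add up to $2n$). I would look for such a coloring with exactly $\ell$ classes of $X$-type and $\ell+1$ of $Y$-type. Let $m$ be the number of $X$-type classes of size $a+1$ (so $r-m$ of the $Y$-type classes have size $a+1$), let $J'$ be the number of $X$-type classes that carry a $Y$-vertex, and $K'$ the number of $Y$-type classes that carry an $X$-vertex. Counting the vertices of $X$ gives $n=(a\ell+m-J')+K'$, hence $J'-K'=m-\frac{a+r}{2}$; since $J'$ ranges in $[0,\ell]$ and $K'$ in $[0,\ell+1]$, suitable $J',K'$ exist exactly when $\frac{a+r}{2}-(\ell+1)\le m\le \frac{a+r}{2}+\ell$. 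Adding the obvious bounds $0\le m\le\ell$ and $r-(\ell+1)\le m\le r$ (from $m$ and $r-m$ counting subsets of the two groups), the whole problem comes down to the nonemptiness of the integer interval
\[
\Bigl[\max\bigl(0,\;r-\ell-1,\;\tfrac{a+r}{2}-\ell-1\bigr),\ \min\bigl(\ell,\;r,\;\tfrac{a+r}{2}+\ell\bigr)\Bigr].
\]

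To conclude I would verify that this interval is non-empty. Its upper endpoint equals $\min(\ell,r)$, because $\frac{a+r}{2}+\ell\ge r$. If $r\le\ell$, nonemptiness reduces to $\frac{a+r}{2}-\ell-1\le r$, i.e.\ $a\le q+r+1$; if $r>\ell$, it reduces to $\frac{a+r}{2}-\ell-1\le\ell$, i.e.\ $a+r\le 2q$. Using $a\le t+3$ and $q\ge t+1$ one sees that both inequalities hold whenever $q\ge t+2$, and also whenever $a\le t+2$; the only surviving case is $t$ even, $q=t+1$, $a=t+3$, and there the parity of $r$ recorded in the first paragraph ($r$ odd, hence $1\le r\le t-1$) supplies exactly the missing slack. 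This is the crux of the argument, and the reason the bound reads $2\lfloor\frac{t+1}{2}\rfloor$ rather than something cleaner: the margin in $a\le t+3$ is razor-thin and only the parity of $2n$ saves it, so the same construction would fail for any $q$ below the stated threshold. The remaining work is routine bookkeeping — once $m,J',K'$ are fixed, showing that the eight cardinalities $|c(X_1)|,\dots$ can be chosen non-negatively and consistently, and that the corresponding partition of $X$ and of $Y$ into parts of the prescribed sizes (assembled into stars in $K_{n,n}$) actually exists — which I would carry out at the end.
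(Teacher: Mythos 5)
Your proposal is correct in substance, but it reaches the construction by a genuinely different route than the paper. Both arguments share the same skeleton: reduce to odd $q$ by Lemma \ref{lem:even}, observe $q\ge t+1$ (with $q=t+1$ forcing $t$ even) and $a\le t+3$, and produce the coloring by prescribing the eight family sizes of Lemma \ref{lem:eq.iff}; moreover the two feasibility inequalities your interval argument boils down to, $a+r\le 2q$ and $q+r\ge a-1$, are exactly the paper's (\ref{ieq:2}) and (\ref{ieq:3}), rescued in the tight case $q=t+1$, $a=t+3$ by the same parity-of-$r$ observation. The difference is how a solution is exhibited: the paper distinguishes three cases ($q\le 2r+1$, $2r+3\le q\le a+r-1$, $q\ge a+r+1$) and writes explicit values for $|c(X_1)|,\dots,|c(Y'_2)|$ in each, whereas you fix the split into $\ell$ $X$-type and $\ell+1$ $Y$-type classes, reduce everything to one parameter $m$ and prove an integer interval is nonempty. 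Your version is more uniform, and it has a concrete advantage: you work with the correct normalization $0\le r\le q-1$, while the paper's formulas are derived under the convention $r\le a-1$ (hence $r\le a-2$), which for large odd $q$ (small $a$) need not hold; your construction therefore covers parameter pairs that the paper's three explicit formulas do not. The bookkeeping you defer is indeed routine: with $D=m-\frac{a+r}{2}$ take $J'=\max(0,D)$, $K'=\max(0,-D)$, and the eight family sizes can then be filled in nonnegatively, each class being an independent set or a star, so of diameter at most $2$.

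One slip should be repaired. Your ``cheap case'' is misstated: $a\le 2$ is equivalent to $q>2n/3$, not to $q>n$, and ``$r$ disjoint pairs plus singletons'' is an equitable partition only when $a\le 1$; for $a=2$ the classes must have sizes $2$ and $3$, and $a=2$ does occur under the hypotheses (e.g.\ $2n=t(t+3)$ with odd $q$ slightly above $2n/3$), while your main argument genuinely needs $a\ge 3$ for the eight families to be disjoint and the $X$/$Y$-type classification to be unambiguous. The case remains trivial, but for the right reason: when $a\le 2$ every color class has at most $3$ vertices, and any set of at most $3$ vertices of $K_{n,n}$ induces a forest of diameter at most $2$, so every equitable $q$-partition whatsoever is an equitable $(q,\infty,2)$-tree-coloring. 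With that one-line fix your proof is complete.
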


\begin{proof}
By Lemma \ref{lem:even}, we assume that $q$ is an odd integer. This implies $q\geq t+1$. If $2n=aq+r$, where $0\leq r\leq a-1$, then the two integers $a$ and $r$ would have the same parity. Note that $a=\frac{2n-r}{q}\leq \frac{2n}{q}<\frac{(t+1)(t+4)}{q}\leq t+4$ and $q\geq t+1$. We have
\begin{align}
r\leq a-2~{\rm and}~a\leq t+3. \label{ieq:1}
\end{align}
Now we prove the following two useful inequations:
\begin{align}
2q &\geq  a+r \label{ieq:2},\\
q+r &\geq a-1 \label{ieq:3}.
\end{align}
First,  if $a\leq t+2$, then $q+r\geq q\geq t+1\geq a-1$ and $2q\geq a+(a-2)\geq a+r$ by (\ref{ieq:1}). Similarly, if  $q\geq a-1$, then we would get the same results.
 Thus we assume that $a=t+3$ and $q\leq a-2$. Since $q\geq t+1=a-2$, $aq=(t+3)(t+1)$.  This implies that $r=2n-aq<(t+1)(t+4)-(t+1)(t+3)=t+1=a-2$, so $r\leq a-4$ and $2q=a+(a-4)\geq a+r$. On the other hand, $q$ and $a$ are both odd since $q=a-2$. It follows that $r=2n-aq>0$. Thus we have $q+r\geq q+1=a-1$.

The proof of this lemma is constructive. Let $X$ and $Y$ be two partite sets of $K_{n,n}$ as described in Lemma \ref{lem:eq.iff}. We are going to construct an equitable $(q,\infty, 2)$-tree-coloring of $K_{n,n}$ by distinguishing three cases.

Case 1. $q\leq 2r+1$.

We construct a coloring $c$ of $K_{n,n}$ by letting
\begin{align*}
|c(X_1)|=\frac{q-1}{2},~|c(Y_2)|=\frac{2q-a-r}{2},~|c(Y'_1)|=\frac{2r+1-q}{2},~|c(Y'_2)|=\frac{a-r}{2}
\end{align*}
and $|c(X_2)|=|c(X'_1)|=|c(X'_2)|=|c(Y_2)|=0$. Since $q\geq 1$, $2q\geq a+r$ by (\ref{ieq:2}), $2r+1\geq q$, $a-2\geq r$, $q$ is odd and $a, r$ have the same parity, the four values $|c(X_1)|,|c(Y_2)|,|c(Y'_1)|$ and $|c(Y'_2)|$ must be nonnegative integers. Moreover, one can easily check that the two equations (\ref{eq:2}) and (\ref{eq:3}) in Lemma \ref{lem:eq.iff} would hold by our choice. Thus $c$ is an equitable $(q,\infty, 2)$-tree-coloring of $K_{n,n}$.

Case 2. $2r+3\leq q\leq a+r-1$.

In this case we can construct a coloring $c$ of $K_{n,n}$ by letting
\begin{align*}
|c(X'_2)|=\frac{q+1}{2},~|c(Y_1)|=\frac{a+r-1-q}{2},~|c(Y_2)|=\frac{q-2r-1}{2},~|c(Y'_1)|=\frac{q+r-a+1}{2}
\end{align*}
and $|c(X_1)|=|c(X_2)|=|c(X'_1)|=|c(Y'_2)|=0$. One can easily see that $|c(X'_2)|, |c(Y_1)|$, $|c(Y_2)|$ and $|c(Y'_1)|$ are all nonnegative integers, since $2r+3\leq q\leq a+r-1$ and $q+r\geq a-1$ by (\ref{ieq:3}). On the other hand, the two equations (\ref{eq:2}) and (\ref{eq:3}) in Lemma \ref{lem:eq.iff} would also hold. Thus $c$ is an equitable $(q,\infty, 2)$-tree-coloring of $K_{n,n}$.

Case 3. $q\geq a+r+1$.

Now we construct a coloring $c$ of $K_{n,n}$ by setting
\begin{align*}
|c(X_2)|=\frac{q-1}{2},~|c(Y_2)|=\frac{q-a-r+1}{2},~|c(Y'_1)|=r,~|c(Y'_2)|=\frac{a-r}{2}
\end{align*}
and $|c(X_1)|=|c(X'_1)|=|c(X'_2)|=|c(Y_1)|=0$. One can easily check that $|c(X_2)|, |c(Y_2)|$, $|c(Y'_1)|$ and $|c(Y'_2)|$ are all nonnegative integers and the two equations (\ref{eq:2}) and (\ref{eq:3}) in Lemma \ref{lem:eq.iff} hold. Hence, $c$ is an equitable $(q,\infty, 2)$-tree-coloring of $K_{n,n}$.
\end{proof}

\begin{lem}\label{lem:t-impossible}
The complete bipartite graph $K_{n,n}$ with $2n=t(t+i)$, $i\geq 2$ and $t$ being odd has no equitable $(t,\infty, 2)$-tree-colorings.
\end{lem}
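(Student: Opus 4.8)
The plan is to argue by contradiction, using the rigidity forced by the exact equality $2n=t(t+i)$ together with the structural description of color classes already isolated in the proof of Lemma~\ref{lem:eq.iff}; the argument is in the same spirit as the sharpness part of Theorem~\ref{thm:main1}.

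First I would pin down the parameters. Take $q=t$ and $a=\lfloor 2n/t\rfloor$; since $2n=t(t+i)$ we have $a=t+i$ and the remainder is $0$, so in any equitable $(t,\infty,2)$-tree-coloring $c$ all $t$ color classes have size exactly $a$. Because $t$ is odd while $2n=ta$, the integer $a$ is even, so write $a=2b$; also note $a>t$, i.e.\ $2b>t$. Assume, for contradiction, that such a $c$ exists, with color classes $V_1,\dots,V_t$.

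Next I would classify the classes. As in Lemma~\ref{lem:eq.iff}, $\min\{|V_i\cap X|,\,|V_i\cap Y|\}\le 1$ for every $i$, for otherwise $G[V_i]$ would contain a $4$-cycle. Since $|V_i|=a\ge 3$, each $V_i$ must be of exactly one of four types: (A) $a$ vertices of $X$; (B) $a$ vertices of $Y$; (C) one vertex of $X$ together with $a-1$ vertices of $Y$; (D) one vertex of $Y$ together with $a-1$ vertices of $X$. Let $\alpha,\beta,\gamma,\delta$ count the classes of type A, B, C, D respectively. Counting the classes and then counting $|X|=n$ and $|Y|=n$ yields
\[
\alpha+\beta+\gamma+\delta=t,\qquad a\alpha+\gamma+(a-1)\delta=n,\qquad a\beta+\delta+(a-1)\gamma=n .
\]
Adding the last two and using $2n=ta$ just reproduces the first equation, so the useful consequence is obtained by subtracting them, namely $a(\alpha-\beta)=(a-2)(\gamma-\delta)$.

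Finally I would extract a contradiction from this Diophantine relation. Substituting $a=2b$ gives $b(\alpha-\beta)=(b-1)(\gamma-\delta)$, and since $\gcd(b,b-1)=1$ there is an integer $w$ with $\gamma-\delta=bw$ and $\alpha-\beta=(b-1)w$. From $|\gamma-\delta|\le\gamma+\delta\le t<2b$ I get $|w|\le 1$. If $w=0$ then $\alpha=\beta$ and $\gamma=\delta$, so $t=2(\alpha+\gamma)$ is even, contradicting the hypothesis that $t$ is odd. If $w=1$ then $\alpha\ge b-1$ and $\gamma\ge b$, hence $\alpha+\gamma\ge 2b-1=a-1>t$, which is impossible as $\alpha+\gamma\le t$; the case $w=-1$ is symmetric, forcing $\beta+\delta\ge a-1>t$. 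This exhausts all cases, so no equitable $(t,\infty,2)$-tree-coloring of $K_{n,n}$ exists.

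The step I expect to be the crux is passing from the three counting equations to a usable constraint: the system is degenerate, so a direct parity check on any single size-equation is inconclusive, and one must instead combine the $\gcd$-parametrization of the difference equation with the elementary bound $t<a$ and the parity of $t$. The classification of classes and the counting are then routine bookkeeping.
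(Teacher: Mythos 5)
Your proof is correct, and it reaches the contradiction by a genuinely different mechanism than the paper, although both arguments start from the same structural fact (from Lemma~\ref{lem:eq.iff}) that $\min\{|V_i\cap X|,|V_i\cap Y|\}\le 1$ for every class, so each class of size $a=t+i$ lies entirely, or all but one vertex, in one part. The paper then finishes in one pigeonhole step: since $t$ is odd, some part, say $X$, hosts at least $\frac{t+1}{2}$ classes, each contributing at least $a-1$ vertices of $X$, whence $2n\ge (t+i-1)(t+1)=t(t+i)+i-1>2n$ --- the parity of $t$ and the vertex count are combined in a single inequality. You instead write down the exact counting equations for the four class types, subtract them to get $a(\alpha-\beta)=(a-2)(\gamma-\delta)$, use that $a$ is even (which is where the oddness of $t$ enters for you) together with $\gcd(b,b-1)=1$ to parametrize the solutions by $w\in\{0,\pm1\}$, and then derive a contradiction on the \emph{number of classes} ($\alpha+\gamma\ge a-1>t$ or $\beta+\delta\ge a-1>t$) rather than on the number of vertices; the $w=0$ case is killed by parity of $t$ alone. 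This costs more bookkeeping but yields a sharper by-product: the full description of which type-distributions are arithmetically possible, which the paper's pigeonhole does not give. One small point to make explicit: in the $w=\pm1$ cases you use the strict inequality $a-1>t$, which needs $i\ge 2$ (so $a-1=t+i-1\ge t+1$); earlier you only recorded $a>t$, and since $a$ is even and $t$ odd that alone would not rule out $a-1=t$, so cite $i\ge2$ at that step. With that one-line justification added, the argument is complete and valid.
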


\begin{proof}
Suppose, to the contrary, that $K_{n,n}$ admits an equitable $(t,\infty, 2)$-tree-coloring $c$. Since $2n=t(t+i)$, the size of every color class of $c$ is exactly $t+i$. By Lemma \ref{lem:eq.iff}, without loss of generation, we can assume $|c(X_1)|+|c(X_2)|+|c(X'_1)|+|c(X'_2)|\geq \frac{t+1}{2}$. Here one should note that $t$ had been supposed to be odd. Thus we have $2n=2|X|\geq 2(t+i-1)(|c(X_1)|+|c(X_2)|+|c(X'_1)|+|c(X'_2)|)\geq (t+i-1)(t+1)=t(t+i)+i-1>t(t+i)=2n$, a contradiction.
\end{proof}

\begin{thm}\label{thm:main}
If $K_{n,n}$ is a complete bipartite graph and $k\geq 3$, then
$va^{\equiv}_{\infty,k }(K_{n,n})=va^{\equiv}_{\infty, 2}(K_{n,n})\leq 2\big\lfloor\frac{\lfloor\frac{-1+\sqrt{8n+9}}{2}\rfloor}{2}\big\rfloor$, and
furthermore, this bound is sharp.
\end{thm}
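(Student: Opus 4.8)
The plan is to peel off the stated identity first and then bolt together the two preceding lemmas. The equality $va^{\equiv}_{\infty,k}(K_{n,n})=va^{\equiv}_{\infty,2}(K_{n,n})$ is the observation recorded just above the theorem: every forest induced by a color class of $K_{n,n}$ has diameter at most $2$ — if some class met both parts of $K_{n,n}$ in at least two vertices it would contain a $4$-cycle — so for $k\geq 2$ the diameter constraint ``$\leq k$'' adds nothing to ``$\leq 2$''. Hence it suffices to bound $va^{\equiv}_{\infty,2}(K_{n,n})$.

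For the upper bound I would set $T:=\lfloor\frac{-1+\sqrt{8n+9}}{2}\rfloor$ and note, by completing the square, that $T$ is exactly the largest integer with $T(T+1)\leq 2n+2$; equivalently, writing $t:=T-1$, that $t$ is the unique integer with $t(t+3)\leq 2n<(t+1)(t+4)$, since $t(t+3)+2=(t+1)(t+2)$ and $(t+1)(t+4)+2=(t+2)(t+3)$. Lemma~\ref{lem:geq t+1} applied to this $t$ then produces an equitable $(q,\infty,2)$-tree-coloring of $K_{n,n}$ for every integer $q\geq 2\lfloor\frac{t+1}{2}\rfloor=2\lfloor\frac{T}{2}\rfloor$, which is precisely the assertion $va^{\equiv}_{\infty,2}(K_{n,n})\leq 2\lfloor\frac{T}{2}\rfloor$.

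For sharpness I would exhibit, for each positive even value the bound can take, an $n$ attaining it. Fix an odd integer $s\geq 1$ and put $n:=\frac{s(s+3)}{2}$; this is an integer because $s+3$ is even, and $8n+9=(2s+3)^2$, so $T=s+1$ and the bound equals $2\lfloor\frac{s+1}{2}\rfloor=s+1$. Here $2n=s(s+3)$ is the left endpoint of the interval $[s(s+3),(s+1)(s+4))$, so the ``$t$'' of the previous paragraph is $s$ and the upper bound already gives $va^{\equiv}_{\infty,2}(K_{n,n})\leq s+1$. For the matching lower bound I would invoke Lemma~\ref{lem:t-impossible} with the odd integer $s$ and $i=3$, since $2n=s(s+3)$: it says $K_{n,n}$ has no equitable $(s,\infty,2)$-tree-coloring. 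As a strong equitable arboricity of value $v$ requires colorings for every $q\geq v$, the failure at $q=s$ forces $va^{\equiv}_{\infty,2}(K_{n,n})\geq s+1$, so equality holds; and as $s$ runs over the odd positive integers, $s+1$ runs over all even values the bound can take, so the bound is sharp.

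The bulk of this is bookkeeping rather than new combinatorics: Lemmas~\ref{lem:geq t+1} and \ref{lem:t-impossible} do the real work. The only point needing care is coordinating the two descriptions of the relevant integer — the floor $\lfloor\frac{-1+\sqrt{8n+9}}{2}\rfloor$ and the interval bound $t(t+3)\leq 2n<(t+1)(t+4)$ — and, for the sharpness half, choosing $n$ so that it simultaneously sits at the left end of such an interval (making the upper bound as small as $s+1$) and has the form $s(s+i)$ with $s$ odd (triggering Lemma~\ref{lem:t-impossible}). The value $2n=s(s+3)$ satisfies both requirements at once, so I do not anticipate a genuine obstacle.
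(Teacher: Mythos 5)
Your proposal is correct and follows essentially the same route as the paper: the diameter-$2$ observation gives the equality, Lemma~\ref{lem:geq t+1} applied to the $t$ with $t(t+3)\leq 2n<(t+1)(t+4)$ (which you verify, more explicitly than the paper, coincides with the floor expression) gives the upper bound, and Lemma~\ref{lem:t-impossible} with $2n=s(s+3)$, $s$ odd, gives the matching lower bound for sharpness. No gaps.
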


\begin{proof}
 Note that in any $(t,k,d)$-tree-coloring of $K_{n,n}$ the diameter of the subgraph induced by the vertices of any color class is at most 2, because otherwise we would find a 4-cycle in $K_{n,n}$ with its incident vertices receiving a same color, a contradiction. Therefore we have $va^{\equiv}_{k,\infty}(K_{n,n})=va^{\equiv}_{\infty, 2}(K_{n,n})$ for every $k\geq 2$.
Let $t= \lfloor\frac{-3+\sqrt{8n+9}}{2}\rfloor$. One can easily check that $t(t+3)\leq 2n<(t+1)(t+4)$. Hence by Lemma \ref{lem:geq t+1}, we have $va^{\equiv}_{\infty, 2}(K_{n,n})\leq 2\lfloor\frac{t+1}{2}\rfloor=2\big\lfloor\frac{\lfloor\frac{-1+\sqrt{8n+9}}{2}\rfloor}{2}\big\rfloor$. To show this bound is sharp, we investigate the graph $G:=K_{n,n}$ with $2n=t(t+3)$ and $t$ being odd. Note that  $2\big\lfloor\frac{\lfloor\frac{-1+\sqrt{8n+9}}{2}\rfloor}{2}\big\rfloor-1=t$, however, $G$ has no equitable $(t,\infty, 2)$-tree-colorings by Lemma \ref{lem:t-impossible}. This implies $va^{\equiv}_{\infty, 2}(G)=t+1$.
\end{proof}


\section{Planar graphs}

\begin{lem}\label{lem:label}
Let $S=\{v_1,\cdots,v_t\}$, where $v_1,\cdots,v_t$ are distinct vertices in $G$. If $G-S$ has an equitable $t$-tree-coloring and $|N_G(v_i)\setminus S|\leq 2i-1$ for every $1\leq i\leq t$, then $G$ has an equitable $t$-tree-coloring.
\end{lem}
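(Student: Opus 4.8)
The plan is to keep the given equitable $t$-tree-coloring of $G-S$ and reinsert the vertices of $S$ one at a time, in the order $v_t,v_{t-1},\dots,v_1$, giving $v_i$ a colour that has not yet been used on any of $v_{i+1},\dots,v_t$. If this succeeds then each of the $t$ colours is assigned to exactly one vertex of $S$, so every colour class of the coloring of $G-S$ gains exactly one vertex; since the class sizes of an equitable coloring differ by at most one, and all of them increase by one, the final coloring of $G$ is again equitable. Thus equitability is automatic, and the only thing to control at each insertion step is that the colour class receiving $v_i$ still induces a forest.

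The key invariant I would maintain is: immediately before colouring $v_i$, the colours already used on $v_{i+1},\dots,v_t$ are pairwise distinct, so exactly $i$ colours are still ``free''. Call a currently coloured neighbour of $v_i$ \emph{relevant} if its colour is free. Every relevant neighbour lies in $N_G(v_i)\setminus S$: the vertices $v_{i+1},\dots,v_t$ already carry used colours, and $v_1,\dots,v_{i-1}$ are still uncoloured, so the only coloured-with-a-free-colour neighbours of $v_i$ come from $V(G-S)$. Hence $v_i$ has at most $|N_G(v_i)\setminus S|\le 2i-1$ relevant neighbours. Now comes the counting step: if each of the $i$ free colours occurred on at least two relevant neighbours of $v_i$, there would be at least $2i>2i-1$ relevant neighbours, a contradiction. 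So some free colour $j$ appears on at most one coloured neighbour of $v_i$. Colouring $v_i$ with $j$ adds to the forest $G[V_j]$ a vertex of degree at most one, which creates no cycle, so $G[V_j\cup\{v_i\}]$ is still a forest; and the invariant is restored with $i-1$ free colours. Running this from $i=t$ (where all $t$ colours are free) down to $i=1$ (where one free colour remains and $v_1$ has at most $2\cdot1-1=1$ coloured neighbour) yields an equitable $t$-tree-coloring of $G$.

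The remaining checks are routine: the base step $i=t$ has all colours free by construction, and an empty colour class is a forest, so the argument is unaffected when $|V(G)|$ is small. I expect the only genuinely delicate point to be the processing order — the vertices must be reinserted from $v_t$ down to $v_1$, so that the vertex carrying the weakest hypothesis $|N_G(v_i)\setminus S|\le 2i-1$ (large $i$) is handled when the most free colours are available; matching this order to the exponent-free linear bound $2i-1$ is exactly what makes the counting step go through, and reversing the order would break it.
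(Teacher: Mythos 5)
Your proposal is correct and takes essentially the same route as the paper: reinsert $v_t,v_{t-1},\dots,v_1$ in that order, giving $v_i$ a colour not used on $v_{i+1},\dots,v_t$ and appearing at most once among its already-coloured neighbours, which exists by the same counting of $i$ free colours against at most $2i-1$ neighbours in $N_G(v_i)\setminus S$. The paper states this more tersely, while you spell out the equitability and forest-preservation checks explicitly; no substantive difference.
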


\begin{proof}
Let $G_i=G\setminus \{v_1,\cdots,v_i\}$. It follows that $G=G_0$ and $G-S=G_t$. Let $c_t$ be an equitable $t$-tree-coloring of $G_t$. For every $t\geq i\geq 1$, we extend the equitable $t$-tree-coloring $c_i$ of $G_i$ to an equitable $t$-tree-coloring $c_{i-1}$ of $G_{i-1}$ by giving $v_i$ a color that is different from the colors in $\{c_i(v_{i+1}),\cdots,c_i(v_t)\}$ and that has been used on the neighbors of $v_i$ at most once. This is possible since $|N_G(v_i)\setminus S|\leq 2i-1$ for every $1\leq i\leq t$. After $t$ iterative extensions, one can check that the vertices in $S$ receive different colors under the final coloring $c_0$. Hence, $c_0$ is an equitable $t$-tree-coloring of $G$.
\end{proof}


\begin{lem}\label{mad103}
Every graph with maximum average degree less then $\frac{10}{3}$ contains at least one of the following configurations.\\
(C1.1) a vertex $x$ of degree $1$;\\
(C1.2) a $2$-vertex $x$ adjacent to a $6$-vertex $y$;\\
(C1.3) a $3$-vertex $x$ adjacent to a $4^-$-vertex $y$ and a $6^-$-vertex $z$;\\
(C1.4) an $i$-vertex $x$ adjacent to at least $i-1$ $2$-vertices, where $i=7,8,9$.
\end{lem}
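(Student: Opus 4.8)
The plan is to argue by contradiction via the discharging method. Suppose $G$ has $\mad(G)<\tfrac{10}{3}$ but contains none of (C1.1)--(C1.4); passing to a connected component, assume $G$ is connected, and note that $\delta(G)\ge 2$ since (C1.1) is absent. Assign to each vertex $v$ the initial charge $\mu(v)=d(v)$, so that $\sum_{v\in V(G)}\mu(v)=2|E(G)|<\tfrac{10}{3}|V(G)|$. I will redistribute charge by local rules, preserving the total, so that every vertex ends with charge at least $\tfrac{10}{3}$; this contradicts the strict inequality and proves the lemma.

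First I would read off the structure forced by the missing configurations. Since (C1.2) is absent, a $2$-vertex has no small-degree neighbour, so both its neighbours have degree at least $7$; in particular no $2$-vertex is adjacent to a $2$- or a $3$-vertex. Since (C1.3) is absent, a $3$-vertex with a neighbour of degree at most $4$ has its other two neighbours of degree at least $7$, while a $3$-vertex with no neighbour of degree at most $4$ has all three neighbours of degree at least $5$. Since (C1.4) is absent, a vertex of degree $i\in\{7,8,9\}$ has at most $i-2$ neighbours of degree $2$. So the only vertices below $\tfrac{10}{3}$ are the $2$-vertices (deficit $\tfrac43$) and the $3$-vertices (deficit $\tfrac13$), and their possible donors are pinned down by the degree restrictions above.

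Next I would set the discharging rules: (R1) each vertex of degree at least $7$ sends $\tfrac23$ to every adjacent $2$-vertex; (R2) each vertex of degree at least $7$ sends $\tfrac16$ to every adjacent $3$-vertex having a neighbour of degree at most $4$; (R3) each vertex of degree at least $5$ sends $\tfrac19$ to every adjacent $3$-vertex all of whose neighbours have degree at least $5$. With these rules a $2$-vertex receives $2\cdot\tfrac23$ from its two neighbours and a $3$-vertex of either type receives exactly $\tfrac13$, so both reach $\tfrac{10}{3}$ by construction.

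It then remains to check that no donor drops below $\tfrac{10}{3}$. A vertex of degree $4$ gives nothing; a vertex of degree $5$ or $6$ gives away at most $\tfrac{d(v)}{9}$ and so stays well above $\tfrac{10}{3}$; and a vertex of degree $k\ge 10$ gives away at most $\tfrac23 k$, leaving $\tfrac{k}{3}\ge\tfrac{10}{3}$. The delicate range is $k\in\{7,8,9\}$, where the (C1.4) bound on the number of $2$-neighbours must be used: the extreme case is a $7$-vertex adjacent to five $2$-vertices and two $3$-vertices of the (R2) type, which loses exactly $5\cdot\tfrac23+2\cdot\tfrac16=\tfrac{11}{3}$ and finishes at precisely $\tfrac{10}{3}$. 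I expect this tight equality at degree $7$ (and at $k=10$) to be the main obstacle: the three rule-constants must be chosen so that the $2$-vertices, both types of $3$-vertex, and the degree-$7$ donors are all satisfied simultaneously; once the constants are right, the rest is routine verification.
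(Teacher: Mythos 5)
Your proposal is correct and follows essentially the same discharging argument as the paper: same initial charge $d(v)$, same target $\tfrac{10}{3}$, and the same main rule sending $\tfrac23$ from $7^+$-vertices to $2$-neighbours, with the forbidden configurations used in exactly the same way. The only difference is cosmetic: the paper simply lets every $4^+$-vertex send $\tfrac16$ to each $3$-neighbour (which already suffices, since a $4$-vertex keeps $4-4\cdot\tfrac16=\tfrac{10}{3}$), whereas you split the $3$-vertices into two types with transfers $\tfrac16$ and $\tfrac19$; your slightly more elaborate rules verify correctly but buy nothing extra.
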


\begin{proof}
Suppose, to the contrary, that $G$ contains none of the four configurations. It follows that $\delta(G)\geq 2$.
Assign initial charge $c(v)=d(v)$ to every vertex $v\in V(G)$. We now redistribute the charges of vertices in $G$ according to Rules 1 and 2 below. \\[.5em]
\noindent \textbf{Rule 1.} A $7^+$-vertex gives $\frac{2}{3}$ to each of its 2-neighbors.\\
\noindent \textbf{Rule 2.} A $4^+$-vertex gives $\frac{1}{6}$ to each of its 3-neighbors.\\[.5em]
Let $c'(v)$ be the charge of $v$ after discharging. Since (C1.2) is forbidden in $G$, every $2$-vertex is adjacent only to $7^+$-vertices in $G$. By Rule 1, we immediately have $c'(v)\geq 2+2\times \frac{2}{3}=\frac{10}{3}$ for every $2$-vertex $v$. Since the absence of (C1.3) in $G$ implies that every 3-vertex is adjacent to two $4^+$-vertices in $G$, $c'(v)\geq 3+2\times\frac{1}{6}=\frac{10}{3}$ for every $3$-vertex $v$ by Rule 2. Let $v$ be a vertex of degree between 4 and 6. By Rule 2, one can easily deduce that $c'(v)\geq d(v)-\frac{1}{6}d(v)\geq \frac{10}{3}$. Let $v$ be a vertex of degree between 7 and 9. Since (C1.4) is absent from $G$, $v$ is adjacent to at most $d(v)-2$ $2$-vertices, therefore, by Rules 1 and 2, we have $c'(v)\geq d(v)-\frac{2}{3}(d(v)-2)-2\times\frac{1}{6}\geq \frac{10}{3}$. At last, if $d(v)\geq 10$, then by Rules 1 and 2, $c'(v)\geq d(v)-\frac{2}{3}d(v)\geq \frac{10}{3}$. Hence, we have $\mad(G)\geq \frac{\sum_{v\in V(G)}c(v)}{|G|}=\frac{\sum_{v\in V(G)}c'(v)}{|G|}\geq \frac{10}{3}$, a contradiction.
\end{proof}

By Lemma \ref{mad103}, we have the following two immediate corollaries.

\begin{cor}\label{lem:girth5}
Every planar graph with girth at least $5$ contains at least one of four configurations mentioned in Lemma $\ref{mad103}$.
\end{cor}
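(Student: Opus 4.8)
The plan is to reduce the statement to Lemma \ref{mad103} via the standard relationship between girth, planarity, and maximum average degree. First I would recall that for a planar graph $G$ with girth at least $g$, every face in a $2$-cell embedding is bounded by at least $g$ edges, so counting incidences between edges and faces gives $2|E(G)| \geq g\,|F(G)|$. Substituting this into Euler's formula $|V(G)| - |E(G)| + |F(G)| = 2$ yields $|E(G)| \leq \frac{g}{g-2}(|V(G)| - 2)$, hence $\mad(G) < \frac{2g}{g-2}$.

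Next I would specialize to $g = 5$: the bound becomes $\mad(G) < \frac{10}{3}$. Since this inequality holds for every subgraph of a planar graph of girth at least $5$ as well (subgraphs of planar graphs of girth $\geq 5$ are again planar of girth $\geq 5$), the hypothesis of Lemma \ref{mad103} is satisfied. Applying Lemma \ref{mad103} directly then shows that $G$ contains one of the four configurations (C1.1)--(C1.4), which is exactly the claim.

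I do not expect a genuine obstacle here: the only points requiring a touch of care are handling graphs that are not $2$-connected (where the face-counting argument needs the version of Euler's formula for connected plane graphs, applied componentwise or after noting that adding edges only helps) and the degenerate cases of very small graphs, which trivially contain a $1$-vertex or have $\mad$ below the threshold. The substantive work has already been done in Lemma \ref{mad103}; this corollary is purely a matter of verifying that $\mad(G) < \frac{10}{3}$ for the graph class in question.
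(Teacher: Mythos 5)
Your proof is correct and is exactly the argument the paper has in mind: the paper states the corollary as immediate from Lemma \ref{mad103}, the implicit step being precisely the Euler-formula bound $\mad(G)<\frac{2g}{g-2}$ for planar graphs of girth at least $g$, specialized to $g=5$. No difference in approach and no gap.
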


\begin{cor}\label{cor:3-degerate}
Every planar graph with girth at least $5$ contains a vertex of degree at most $3$.
\end{cor}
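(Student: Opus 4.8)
The plan is to read this off almost immediately from Corollary \ref{lem:girth5}, which already asserts that every planar graph $G$ with girth at least $5$ contains one of the four configurations (C1.1)--(C1.4) of Lemma \ref{mad103}. (If one prefers a self-contained route, note that a planar graph of girth at least $5$ satisfies $|E(G)|\le \frac{5}{3}(|V(G)|-2)$, and since every subgraph of $G$ is again planar of girth at least $5$, this gives $\mad(G)<\frac{10}{3}$, so Lemma \ref{mad103} applies directly.)

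The second step is to inspect the four configurations and exhibit in each a vertex of degree at most $3$. In (C1.1) the configuration \emph{is} a $1$-vertex; in (C1.2) it contains a $2$-vertex $x$; and in (C1.3) it contains a $3$-vertex $x$. In all three of these cases the named vertex $x$ itself already has degree at most $3$. The only case that deserves a remark is (C1.4): there the distinguished vertex $x$ is an $i$-vertex with $i\in\{7,8,9\}$, which is large, but $x$ is adjacent to at least $i-1\ge 6$ vertices of degree $2$, and any one of these $2$-neighbors is the desired vertex of degree at most $3$.

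Putting these together: $G$ must contain at least one of (C1.1)--(C1.4), and each of them harbors a vertex of degree at most $3$, which proves the corollary. I do not expect any genuine obstacle here; the single point to keep straight is that in configuration (C1.4) the low-degree vertex one extracts is a neighbor of $x$, not $x$ itself.
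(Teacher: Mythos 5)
Your proof is correct and follows essentially the route the paper intends, since the paper presents this as an immediate consequence of Lemma \ref{mad103} via Corollary \ref{lem:girth5}, and your case check (in particular noting that in (C1.4) the low-degree vertex is a $2$-neighbor of $x$, not $x$ itself) fills in exactly the routine verification left implicit. One could shortcut even further: the bound $\mad(G)<\frac{10}{3}<4$ already forces a vertex of degree at most $3$ without inspecting the configurations at all.
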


\begin{thm}\label{thm:girth5}
If $G$ is a planar graph with girth at least 5, then $G$ has an equitable $t$-tree-coloring for every $t\geq 3$, that is, $va^{\equiv}_{\infty,\infty}(G)\leq 3$.
\end{thm}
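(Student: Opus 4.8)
The plan is to fix $t\ge 3$ and prove by induction on $n=|V(G)|$ that every planar graph $G$ of girth at least $5$ admits an equitable $t$-tree-coloring. If $n\le t$, color the vertices with pairwise distinct colors: each class then has at most one vertex, so the coloring is equitable and each class induces a (trivial) forest. If $n\ge t+1$, the whole task is to produce a set $S=\{v_1,\dots,v_t\}$ of $t$ distinct vertices, with the indicated ordering, such that $|N_G(v_i)\setminus S|\le 2i-1$ for all $i$; for then $G-S$ is again planar of girth at least $5$ with fewer vertices, hence equitably $t$-tree-colorable by induction, and Lemma \ref{lem:label} lifts the coloring to $G$. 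The only truly binding requirement is the one on $v_1$ (at most one neighbor outside $S$): the bound $2i-1$ for $i\ge 2$ is automatically met if $v_i$ is taken to be a vertex of degree at most $3$ in $G\setminus\{v_1,\dots,v_{i-1}\}$, which exists by Corollary \ref{cor:3-degerate}. So the crux is to find a good $v_1$ from the configuration furnished by Corollary \ref{lem:girth5}.

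If $G$ has a $1$-vertex $x$ (C1.1), put $v_1=x$; then $|N_G(v_1)\setminus S|\le 1$ regardless of $S$. If $G$ has a $2$-vertex $x$ with a $6^-$-neighbor $y$ (C1.2), put $v_1=x$, $v_3=y$, and choose $v_2$ of degree $\le 3$ in $G-\{x,y\}$: then only the second neighbor of $x$ can lie outside $S$, and $|N_G(y)\setminus S|\le d(y)-1\le 5=2\cdot 3-1$. If $G$ has a $3$-vertex $x$ with a $4^-$-neighbor $y$ and a $6^-$-neighbor $z$ (C1.3), put $v_1=x$, $v_2=y$, $v_3=z$: then $|N_G(x)\setminus S|\le 1$, $|N_G(y)\setminus S|\le d(y)-1\le 3$, and $|N_G(z)\setminus S|\le d(z)-1\le 5$. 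If $G$ has an $i$-vertex $x$ ($i\in\{7,8,9\}$) with at least $i-1$ neighbors of degree $2$ (C1.4), let $m=\lceil (i-1)/3\rceil$ (so $m=2$ for $i=7$, $m=3$ for $i\in\{8,9\}$), put $m$ of those $2$-vertices at $v_1,\dots,v_m$ and the hub $x$ at $v_{m+1}$: each such $2$-vertex has at most one neighbor besides $x$ outside $S$, and $|N_G(x)\setminus S|\le i-m\le 2m+1=2(m+1)-1$. In every case the remaining positions are filled with degree-$\le 3$ vertices as above; distinctness of the chosen vertices is immediate.

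The main obstacle is configuration C1.4 with $i\in\{8,9\}$ when $t=3$, since the construction above needs $t\ge m+1=4$; indeed, when $t=3$ and none of C1.1--C1.3 is present one can check there is \emph{no} admissible choice of $v_1$, so Lemma \ref{lem:label} cannot be applied with $|S|=3$. For this leftover case I would instead delete only the hub $x$: $G-x$ is planar of girth $\ge 5$ and smaller, hence equitably $t$-tree-colorable by induction, and the $\ge i-1\ge 7$ former $2$-vertices become leaves of $G-x$. Using this supply of leaves as spare capacity, recolor a bounded number of them to vacate a smallest color class, insert $x$ into it, and verify that each color class stays a forest and that the coloring is again equitable (which requires tracking $n$ modulo $t$). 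This recoloring step is the delicate point; the configuration-based constructions and the base case are routine.
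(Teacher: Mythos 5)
Your construction of $S$ for configurations (C1.1)--(C1.3), for (C1.4) with $i=7$, and for (C1.4) with $i\in\{8,9\}$ and $t\geq 4$ matches the paper's argument (the paper places the ``big'' vertex at position $v_t$ rather than at a fixed small index, but both choices satisfy $|N_G(v_i)\setminus S|\leq 2i-1$, and filling the remaining slots with $3^-$-vertices via Corollary \ref{cor:3-degerate} is the same device). The problem is the case you yourself isolate as the crux: (C1.4) with $i\in\{8,9\}$ and $t=3$. There you do not give a proof; you sketch ``delete only the hub $x$, recolor a bounded number of its leaf-neighbors, insert $x$ into a smallest class'' and explicitly concede that the recoloring step --- the only nontrivial verification --- is left undone. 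As written, this is a genuine gap, and the sketch has concrete obstacles: (i) equitability bookkeeping, since $G-x$ has $|V(G)|-1$ vertices, so after induction the classes of $G-x$ need not admit the insertion of one extra vertex into an arbitrary class, and recoloring leaves to ``vacate'' or shrink a class changes class sizes and can itself destroy equitability; (ii) acyclicity, since $x$ has $8$ or $9$ neighbors spread over only $3$ classes, so whichever class receives $x$ may contain two of $x$'s neighbors lying in the same tree component (two leaf-neighbors of $x$ can share a component, and there may also be a non-$2$-neighbor in that class), which would create a cycle; nothing in your sketch rules this out.

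The paper resolves exactly this case differently, and the difference is what makes the argument close: it deletes $x$ \emph{together with five of its $2$-neighbors} $x_1,\dots,x_5$, i.e.\ exactly $2t=6$ vertices. Then the inductive coloring of $G'=G-\{x,x_1,\dots,x_5\}$ is extended by giving each of the three classes exactly two new vertices, so equitability is automatic with no size tracking; and since $|N_G(x)\setminus\{x_1,\dots,x_5\}|=i-5\leq 4$, either some color is absent from this set (then $x$ gets that color and its only same-colored neighbor is the pendant $x_1$), or two colors appear there exactly once each, and a short case analysis using the fact that each $x_j$ has only one neighbor besides $x$ guarantees the class containing $x$ stays a forest. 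To repair your proof you should either adopt this ``delete $2t$ vertices'' trick or supply the full recoloring argument you deferred; until then the theorem is not proved for $t=3$ when only configuration (C1.4) with $i\in\{8,9\}$ is available.
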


\begin{proof}
By Corollary \ref{lem:girth5}, $G$ contains at least one of the configurations (C1.1)--(C1.4). In what follows, we prove the theorem by induction on the order of $G$, via assigning $t$ distinct vertices to $S=\{v_1,\cdots,v_t\}$ as described in Lemma \ref{lem:label}, where $t\geq 3$.

If $G$ contains the configuration (C1.1), then let $x:=v_1$. If $G$ contains the configuration (C1.2), then let $x:=v_1$ and $y:=v_t$. If $G$ contains the configuration (C1.3), then let $x:=v_1$, $y:=v_2$ and $w:=v_t$. If $G$ contains the configuration (C1.4) and $i=7$, then let $y:=v_1$, $z:=v_2$ and $x:=v_t$, where $y$ and $z$ are two 2-vertices that are adjacent to $x$. If $G$ contains the configuration (C4), $8\leq i\leq 9$ and $t\geq 4$, then let $y:=v_1$, $z:=v_2$ and $x:=v_t$, where $y$ and $z$ are two 2-vertices that are adjacent to $x$. Now in each case we fill the remaining unspecified positions in $S=\{v_1,v_2,\cdots,v_t\}$ from highest to lowest indices properly. Indeed, one can easily complete it by choosing at each step a vertex of degree at most 3 in the graph obtained from $G$ by deleting the vertices chosen for $S$ with higher indices. Corollary \ref{cor:3-degerate} guarantees that such vertices always exist. Meanwhile, by doing so, we would have $|N_G(v_i)\setminus \{v_{i+1},\cdots,v_t\}|\leq 2i-1$ for every $1\leq i\leq t$. Since $G-S$ is a planar graph with girth at least 5 and with order less that $G$, by induction hypothesis, $G-S$ has an equitable $t$-tree-coloring. Hence by Lemma \ref{lem:label}, $G$ also admits an equitable $t$-tree-coloring.

Now one should be care of that we have ignored two cases in the above discussions. There are the cases that $G$ contains configuration (C4), $8\leq i\leq 9$ and $t=3$. Let $x_1,\cdots,x_5$ be five 2-neighbors of $x$ in $G$. Consider the graph $G'=G-\{x,x_1,\cdots,x_5\}$. By induction, $G'$ has an equitable $3$-tree-coloring $c'$. If there is one color, say 1, which has not appeared on the vertex set $N_G(x)\setminus \{x_1,\cdots,x_5\}$ under the coloring $c'$, then we color $x, x_1$ by 1, $x_2, x_3$ by 2 and $x_4,x_5$ by 3. One can check that the extended coloring of $G$ is an equitable $3$-tree-coloring. Otherwise, since $|N_G(x)\setminus \{x_1,\cdots,x_5\}|=i-5\leq 4$, there are two colors, say $1$ and $2$, which have been used only once on the vertex set $N_G(x)\setminus \{x_1,\cdots,x_5\}$ under the coloring $c'$. Without loss of generality, denote the other neighbor of $x_1$ besides $x$ was colored by 1. We now color $x, x_1$ by 2, $x_2, x_3$ by 1 and $x_4,x_5$ by 3. One can also check that the resulting coloring of $G$ is an equitable $3$-tree-coloring.
\end{proof}

\begin{lem}\label{mad3}
Every graph with maximum average degree less then $3$ contains at least one of the following configurations.\\
(C2.1) a vertex $x$ of degree 1;\\
(C2.2) a $2$-vertex $x$ adjacent to a $4^-$-vertex $y$;\\
(C2.3) a 5-vertex $x$ adjacent to five 2-vertices .
\end{lem}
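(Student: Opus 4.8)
The plan is to argue by contradiction through a discharging argument, entirely parallel to the proof of Lemma~\ref{mad103}. Suppose $G$ satisfies $\mad(G)<3$ but contains none of the configurations (C2.1)--(C2.3). The absence of (C2.1) gives $\delta(G)\geq 2$. The absence of (C2.2) means every $2$-vertex of $G$ has both of its neighbors among the $5^+$-vertices. The absence of (C2.3) means every $5$-vertex of $G$ is adjacent to at most four $2$-vertices. These three structural facts are all that will be needed.

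Next I would assign to each vertex $v$ the initial charge $c(v)=d(v)$ and apply the single discharging rule: every $5^+$-vertex sends $\frac{1}{6}\cdot 3=\frac12$ to each of its $2$-neighbors (i.e.\ each $5^+$-vertex gives $\frac12$ to each $2$-neighbor). Let $c'(v)$ denote the charge after discharging. I would then verify $c'(v)\geq 3$ for every $v$ by a short case check: a $2$-vertex receives $\frac12$ from each of its two $5^+$-neighbors, so $c'(v)=2+2\times\frac12=3$; a $3$-vertex or a $4$-vertex neither sends nor receives charge, so $c'(v)=d(v)\geq 3$; a $5$-vertex loses at most $4\times\frac12=2$, so $c'(v)\geq 5-2=3$; and a vertex $v$ of degree $d(v)\geq 6$ loses at most $\frac12 d(v)$, so $c'(v)\geq d(v)-\frac12 d(v)=\frac12 d(v)\geq 3$.

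Since discharging preserves the total charge, $\sum_{v\in V(G)}c'(v)=\sum_{v\in V(G)}c(v)=2|E(G)|$, and therefore $\mad(G)\geq \frac{2|E(G)|}{|V(G)|}=\frac{\sum_{v\in V(G)}c'(v)}{|V(G)|}\geq 3$, contradicting the assumption $\mad(G)<3$. This contradiction finishes the argument.

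I expect no real obstacle here; the argument uses a single rule and each case reduces to a one-line inequality. The only point requiring care is the $5$-vertex case, where the estimate $5-2=3$ is tight: this is exactly why (C2.3) must forbid a $5$-vertex with \emph{five} $2$-neighbors (not fewer), and why (C2.2) is phrased with a $4^-$-vertex rather than a $5^-$-vertex, so that $3$- and $4$-vertices, which already start with charge $\geq 3$, are left untouched by the rule. I would state the rule so that charge flows only from $5^+$-vertices to $2$-vertices, and double-check the tight $5$-vertex bound before concluding.
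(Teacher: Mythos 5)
Your proposal is correct and follows essentially the same discharging argument as the paper: same initial charge $c(v)=d(v)$, the same single rule (each $5^+$-vertex gives $\frac12$ to each $2$-neighbor), and the same case check yielding $c'(v)\geq 3$ for all vertices, contradicting $\mad(G)<3$. No issues.
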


\begin{proof}
Suppose, to the contrary, that $G$ contains none of the four configurations. It follows that $\delta(G)\geq 2$.
Assign initial charge $c(v)=d(v)$ to every vertex $v\in V(G)$. We now redistribute the charges of vertices in $G$ according to the following rule.\\[.5em]
\noindent \textbf{Rule.} A $5^+$-vertex gives $\frac{1}{2}$ to each of its 2-neighbors.\\[.5em]
Let $c'(v)$ be the charge of $v$ after discharging. Since $G$ does not contain (C2.2), every 2-vertex is adjacent to two $5^+$-vertices in $G$. Therefore, $c'(v)\geq 2+2\times\frac{1}{2}=3$ for every 2-vertex $v$ by the discharging rule. Since 3-vertices and 4-vertices are not involved in the rule, $c'(v)=d(v)\geq 3$ for $3\leq d(v)\leq 4$. If $d(v)=5$, then $v$ is adjacent to at most four 2-vertices because of the absence of (C2.3) from $G$, so $c'(v)\geq d(v)-4\frac{1}{2}=3$. If $d(v)\geq 6$, then by the discharging rule, we still have $c'(v)\geq d(v)-\frac{1}{2}d(v)\geq 3$.
Hence, we have $\mad(G)\geq \frac{\sum_{v\in V(G)}c(v)}{|G|}=\frac{\sum_{v\in V(G)}c'(v)}{|G|}\geq 3$, a contradiction.
\end{proof}

By Lemma \ref{mad3}, we have the following immediate corollary.

\begin{cor}\label{lem:girth6}
Every planar graph with girth at least $6$ contains at least one of three configurations mentioned in Lemma $\ref{mad3}$.
\end{cor}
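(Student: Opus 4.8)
The plan is to deduce the corollary directly from Lemma~\ref{mad3}: it is enough to check that every planar graph $G$ with girth at least $6$ satisfies $\mad(G)<3$, and then to quote the lemma. Recall that $\mad(G)=\max\{2|E(H)|/|V(H)|:\emptyset\neq H\subseteq G\}$. Since some connected component of a subgraph $H$ realizes a ratio at least as large as $H$ itself, it suffices to bound $2|E(H)|/|V(H)|$ over the connected subgraphs $H$ of $G$; each such $H$ is again planar, and, as every cycle of $H$ is a cycle of $G$, has girth at least $6$ as well (a forest being regarded as having infinite girth).

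Now fix a connected subgraph $H$ with girth at least $6$. If $H$ is a tree then $|E(H)|=|V(H)|-1$, so $2|E(H)|/|V(H)|<2<3$. Otherwise $H$ has a cycle; take a plane embedding of $H$ with $F$ faces. By Euler's formula $|V(H)|-|E(H)|+F=2$, while the girth hypothesis forces every facial walk to have length at least $6$, so $6F\le 2|E(H)|$, that is $F\le|E(H)|/3$. Substituting, $2=|V(H)|-|E(H)|+F\le|V(H)|-\tfrac{2}{3}|E(H)|$, hence $\tfrac{2}{3}|E(H)|\le|V(H)|-2<|V(H)|$ and therefore $2|E(H)|/|V(H)|<3$. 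This shows $\mad(G)<3$.

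With $\mad(G)<3$ established, Lemma~\ref{mad3} applies verbatim to $G$ and exhibits one of the configurations (C2.1), (C2.2), (C2.3) in $G$, which is exactly the assertion of the corollary. I do not expect any genuine obstacle here: this is the standard Euler-formula sparsity estimate, and the only points one has to be slightly attentive to are the reduction to connected subgraphs before invoking Euler's formula, and the usual mild care with bridges when asserting that every facial walk has length at least the girth. Everything else is a routine computation.
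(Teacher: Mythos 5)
Your argument is correct and is exactly what the paper has in mind: the corollary is stated as an immediate consequence of Lemma \ref{mad3} via the standard Euler-formula fact that a planar graph of girth at least $6$ has maximum average degree less than $3$. Spelling out the reduction to connected subgraphs and the facial-walk count is fine and fills in the routine details the paper leaves implicit.
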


\begin{thm}\label{thm:girth6}
 If $G$ is a planar graph with girth at least 6, then $G$ has an equitable $t$-tree-coloring for every $t\geq 2$, that is, $va^{\equiv}_{\infty,\infty}(G)=2$ if $G$ is not a forest and $va^{\equiv}_{\infty,\infty}(G)=1$ otherwise.
\end{thm}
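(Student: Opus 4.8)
The plan is to mirror the proof of Theorem \ref{thm:girth5}, but now with the target $t\geq 2$ instead of $t\geq 3$, using Corollary \ref{lem:girth6} in place of Corollary \ref{lem:girth5}. First I would dispose of the trivial direction: if $G$ is a forest then the vertices of $G$ can be partitioned equitably into any number $t\geq 1$ of classes arbitrarily, since every induced subgraph is still a forest, so $va^{\equiv}_{\infty,\infty}(G)=1$; and if $G$ is not a forest then $G$ contains a cycle, so it is not $(\infty,\infty)$-tree-colorable with one color, hence $va^{\equiv}_{\infty,\infty}(G)\geq 2$. It therefore remains to prove that every planar graph $G$ with girth at least $6$ admits an equitable $t$-tree-coloring for every $t\geq 2$.

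I would prove this by induction on $|V(G)|$, using Lemma \ref{lem:label} with $|S|=t$. The base cases are handled directly, so assume $|V(G)|>t$. By Corollary \ref{lem:girth6}, $G$ contains one of (C2.1)--(C2.3). As in Theorem \ref{thm:girth5}, note that a graph with girth at least $6$ has $\mathrm{mad}<3$, hence always contains a vertex of degree at most $2$; this is the analogue of Corollary \ref{cor:3-degerate} and is what lets me fill the unspecified slots of $S=\{v_1,\dots,v_t\}$ from high index to low index with vertices of degree at most $2$ in the successively deleted graph, guaranteeing $|N_G(v_i)\setminus\{v_{i+1},\dots,v_t\}|\leq 2\leq 2i-1$ for every $i\geq 1$. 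If $G$ contains (C2.1), set $x:=v_1$. If $G$ contains (C2.2), set $x:=v_1$ (the $2$-vertex) and $y:=v_t$ (its $4^-$-neighbour), so that $|N_G(v_1)\setminus\{v_2,\dots,v_t\}|\leq 1$. In both these cases $G-S$ is planar with girth at least $6$ and smaller order, so by induction it has an equitable $t$-tree-coloring, and Lemma \ref{lem:label} extends it to $G$.

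The remaining configuration, (C2.3), is the case requiring care, and here the degeneracy argument alone does not suffice when $t$ is small (just as (C1.4) with $8\le i\le 9$ forced a separate argument for $t=3$ in Theorem \ref{thm:girth5}). Let $x$ be the $5$-vertex with five $2$-neighbours $x_1,\dots,x_5$. If $t\geq 3$, one can still put two of the $x_j$ at the bottom of $S$ and $x$ at the top: set $v_1:=x_1$, $v_2:=x_2$, $v_t:=x$, and fill the rest with low-degree vertices as above; then $|N_G(x_1)\setminus S|\le 1$, $|N_G(x_2)\setminus S|\le 1$, and $|N_G(x)\setminus S|\le 3\le 2t-1$, so Lemma \ref{lem:label} applies to $G-S$ (planar, girth $\ge 6$, smaller). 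The genuinely delicate subcase is $t=2$: here $S$ has only two slots, $x$ has five $2$-neighbours, and $x$ itself need not be a low-degree vertex, so I would instead delete the whole gadget. Consider $G'=G-\{x,x_1,\dots,x_5\}$, which is planar with girth at least $6$; by induction $G'$ has an equitable $2$-tree-coloring $c'$. The two colors split the vertices of $G'$ into classes of sizes differing by at most one, and I must re-insert $x,x_1,\dots,x_5$ keeping sizes balanced (so three vertices of each color) while avoiding creating a monochromatic cycle. Since $x$ is the only new vertex of degree $>2$ among the six, I color $x$ together with one $x_j$ in one color and the other four $x_j$ split two-and-two; the key observation is that each $x_j$ has only one neighbour ($x$'s other neighbour) outside the gadget, so no $x_j$ can close a monochromatic cycle regardless of its color, and $x$ closes no monochromatic cycle provided the color I give $x$ appears on at most one vertex of $N_G(x)\setminus\{x_1,\dots,x_5\}$; since $|N_G(x)\setminus\{x_1,\dots,x_5\}|=0$ this is automatic, and one checks the induced monochromatic graphs remain forests. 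Choosing which $x_j$ to pair with $x$ to respect equitability is then a finite check. The main obstacle is exactly this $t=2$ handling of (C2.3): balancing the six reinserted vertices three-and-three while certifying that the two color classes still induce forests; everything else is a routine adaptation of Theorem \ref{thm:girth5}.
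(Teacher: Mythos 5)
Your overall plan is the same as the paper's: reduce to the case $t=2$ (the paper simply cites Theorem \ref{thm:girth5} for $t\geq 3$, while you re-derive it), handle (C2.1) and (C2.2) by Lemma \ref{lem:label} with $S=\{v_1,v_2\}$ and induction, and handle (C2.3) for $t=2$ by deleting a gadget around the $5$-vertex $x$ and re-inserting it. You delete all six vertices $\{x,x_1,\dots,x_5\}$ and re-insert three per colour class, whereas the paper deletes only $\{x,x_1,x_2,x_3\}$ and re-inserts two per class with an explicit case analysis on $c'(x_4),c'(x_5)$ and the colours of the outer neighbours $x_1',x_2',x_3'$; your variant is equally compatible with equitability. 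The defect is in your verification that the two classes remain forests, which is exactly the subcase you yourself single out as delicate.

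You claim that because $|N_G(x)\setminus\{x_1,\dots,x_5\}|=0$, the vertex $x$ ``closes no monochromatic cycle'' automatically, and that no $x_j$ can close one since it has a single neighbour outside the gadget. This misses the actual danger: if $x$, $x_i$, $x_j$ all get colour $\alpha$ and the outside neighbours $x_i'$, $x_j'$ are both coloured $\alpha$ and lie in one tree of the $\alpha$-class of $c'$ (they are distinct by the girth condition, but may be joined by an $\alpha$-coloured path in $G'$), then $x\,x_i\,x_i'\cdots x_j'\,x_j\,x$ is a monochromatic cycle through $x$. So the forest condition is not automatic; it depends on which two $x_j$'s share $x$'s colour, precisely the choice you defer to ``a finite check.'' Moreover, the split you describe ($x$ with one $x_j$ in one colour, the other four split two-and-two) places four new vertices in one class and two in the other, not the three-and-three you need. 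Both defects are repairable: some colour $\beta$ appears on at least three of $x_1',\dots,x_5'$; give $x$ the other colour $\alpha$ together with two $x_j$'s whose outside neighbours are coloured $\beta$, and colour the remaining three $x_j$'s with $\beta$. Then the $\alpha$-class gains only an isolated path $x_i\,x\,x_j$, the $\beta$-class gains three vertices each with at most one neighbour in the class (no two $x_j$'s are adjacent by girth), so both classes stay forests and the colouring is equitable. As written, though, the key step of the hardest subcase is unjustified.
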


\begin{proof}
By Theorem \ref{thm:girth5}, we only need to show that $G$ has an equitable $2$-tree-coloring. We now apply induction on the order of $G$.

By Corollary \ref{lem:girth6}, $G$ contains one of the configurations among (C2.1), (C2.2) and (C2.3). If $G$ contains (C2.1), then by Corollary \ref{cor:3-degerate}, there exists a $3^-$-vertex $y$ in $G-x$. Now let $x:=v_1$ and $y:=v_2$. If $G$ contains (C2.2), then again let $x:=v_1$ and $y:=v_2$. In each case let $S=\{v_1,v_2\}$. We then have $|N_G(v_1)\setminus S|\leq 1$ and $|N_G(v_2)\setminus S|\leq 3$. Since $G-S$ has an equitable $2$-tree-coloring by induction, $G$ admits an equitable $2$-tree-coloring by Lemma \ref{lem:label}.

If $G$ contains (C2.3), then let $x_1,\cdots,x_5$ be the five 2-neighbors of $x$ and let $G'=G-\{x,x_1,x_2,x_3\}$. By induction, $G'$ has an equitable $2$-tree-coloring $c'$. If $c'(x_4)=c'(x_5)=1$, then color $x,x_1$ by 2 and $x_2,x_3$ by 1. If $c'(x_4)=1$ and $c'(x_5)=2$, then denote the other neighbor of $x_i$ besides $x$ be $x'_i$. If $c'(x'_1)=c'(x'_2)=c'(x'_3)=1$, then color $x,x_1$ by 2 and $x_2,x_3$ by 1. Otherwise, if $c'(x'_1)=1$ and $c'(x'_2)=c'(x'_3)=2$, then color $x,x_1$ by 2 and $x_2,x_3$ by 1. In each case one can check that the extended coloring of $G$ is an equitable $2$-tree-coloring.
\end{proof}

A graph is outerplanar if it can be drawn in the plane so that all vertices are lying on the outside face. It is easy to see that every outerplanar graph is planar. The following structural lemma for outerplanar graphs has been proved by many authors.

\begin{lem}\cite{BW}\label{lem:outer}
Every outerplanar graph with minimum degree at least two contains one of the following configurations:\\
(C1) two adjacent 2-vertices $u$ and $v$;\\
(C2) a 3-cycle $uvw$ with $d(u)=2$ and $d(v)=3$;\\
(C3) two intersecting 3-cycles $uvw$ and $xyw$ with $d(u)=d(x)=2$ and $d(w)=4$.
\end{lem}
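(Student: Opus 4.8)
\medskip
\noindent\textbf{Proof proposal.}

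The plan is to argue by induction on $|V(G)|$, reducing first to the $2$-connected case and then exploiting the outerplane embedding. If $G$ is disconnected, a component already has minimum degree at least two and is outerplanar, so I may assume $G$ is connected. If $G$ has a cut-vertex, let $B$ be an endblock of $G$ and $c$ the unique cut-vertex of $G$ lying in $B$. Since $\delta(G)\ge2$, $B$ is not a single edge, hence $B$ is a $2$-connected outerplanar graph with at least three vertices and with $|V(B)|<|V(G)|$, and every vertex of $B$ other than $c$ keeps its $G$-degree. The idea is to apply the $2$-connected case to $B$ to locate one of (C1)--(C3) inside $B$ whose degree-constrained vertices avoid $c$; such a copy then survives in $G$. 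So the substance is the $2$-connected case.

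Assume $G$ is $2$-connected and outerplanar; fix an outerplane drawing, so the outer face is bounded by a Hamiltonian cycle $C=v_1v_2\cdots v_n$ and all remaining edges are chords. If $C$ has no chord, then $G=C$ and any two consecutive vertices give (C1). Otherwise call a chord an \emph{ear} if one of its two $C$-arcs has exactly one internal vertex. A shortest chord is an ear: otherwise its shorter arc has two or more internal vertices, and since no chord can be incident to an internal vertex of that arc (such a chord would be shorter, or would cross the given chord), these internal vertices are consecutive $2$-vertices, a (C1). The internal vertex $a$ of an ear has $d(a)=2$, and for $n\ge4$ the two endpoints $w,w'$ of the ear satisfy $d(w),d(w')\ge3$ (each is incident to two cycle-edges and the chord; for $n=3$, $G=K_3$ and we have (C1)). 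If $d(w)=3$ or $d(w')=3$, the triangle $waw'$ is a (C2). Hence I may assume from now on that every ear has both endpoints of degree at least $4$.

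In this remaining situation I claim the ear triangles link up around the whole of $C$. Starting from an ear triangle $waw'$ as above, consider the inner face $F$ incident with the cycle-edge $w'a'$, where $a'$ is the cycle-neighbour of $w'$ distinct from $a$. Since $d(w')\ge4$, $w'$ has a chord besides $ww'$; the first such chord met when rotating at $w'$ from $a'$ toward the interior bounds $F$, and tracing $F$ shows that $a'$, together with any vertices strictly between $a'$ and the other endpoint of that chord along $C$, has degree $2$. Two such $2$-vertices would be consecutive, a (C1); so $F$ is itself an ear triangle $w'a'w''$ with $d(a')=2$. Iterating to the right, the ear triangles form a cyclic family $Q_0,Q_1,\dots,Q_{m-1}$ with $Q_i=w_{i-1}a_iw_i$ (indices mod $m$), the apices $a_i$ all of degree $2$, the shared vertices $w_i$ all of degree at least $4$, and $C=w_0a_1w_1a_2\cdots w_{m-1}a_0$; here $m\ge3$, since $m\le2$ would force $d(w_0)=3$. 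Because $G$ is outerplanar, $V(G)=\{w_0,\dots,w_{m-1}\}\cup\{a_0,\dots,a_{m-1}\}$. Now delete the apices: $G'=G-\{a_0,\dots,a_{m-1}\}$ is outerplanar with Hamiltonian cycle $w_0w_1\cdots w_{m-1}$, and $d_{G'}(w_i)=d_G(w_i)-2$. If every $w_i$ had degree $\ge5$ in $G$, then $\delta(G')\ge3$, contradicting the $2$-degeneracy of outerplanar graphs. So some $w_i$ has $d_G(w_i)=4$; then the triangles $Q_i=w_{i-1}a_iw_i$ and $Q_{i+1}=w_ia_{i+1}w_{i+1}$ meet only in $w_i$, and since $d(a_i)=d(a_{i+1})=2$ this is precisely a (C3), completing the $2$-connected case.

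I expect two main obstacles. Within the $2$-connected argument, the delicate point is to make rigorous that the chain of ear triangles wraps exactly once around $C$ and exhausts the vertex set --- this is what converts the ``all shared vertices have degree $\ge5$'' case into a contradiction with $2$-degeneracy, and it uses outerplanarity essentially. In the reduction, the awkward point is the bookkeeping around the cut-vertex $c$: when the $2$-connected analysis of an endblock $B$ is forced to output a configuration using $c$ in a degree-constrained role, one must revisit the structure of $B$ near $c$ (for instance, noting that $c$ cannot be a degree-$2$ apex of $B$ while keeping its $B$-degree in $G$, and that the cyclic chain, or the abundance of $2$-vertices, supplies an alternative copy) in order to relocate the configuration off $c$; doing this uniformly over all shapes of endblock is the fussiest part of the write-up.
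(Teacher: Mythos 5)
The paper does not prove this lemma at all --- it is quoted from Borodin and Woodall \cite{BW} --- so your argument has to stand on its own, and it does not: the key propagation step in your $2$-connected case is false, not merely unproved. You claim that, once every ear has both endpoints of degree at least $4$, the inner face $F$ incident with the cycle-edge $w'a'$ must again be an ear triangle because ``tracing $F$ shows that $a'$ \dots has degree $2$.'' Tracing $F$ shows no such thing: $F$ returns to $w'$ along the first chord $w'z$, but from $a'$ it may leave the outer cycle along a chord of $a'$, since nothing in your hypotheses forbids chords among the vertices strictly between $w'$ and $z$. Concretely, take the outer cycle $w\,a\,w'\,u_1\,u_2\,u_3\,u_4\,u_5\,z\,p\,q\,r$ with chords $ww',\,w'z,\,u_1u_3,\,u_3u_5,\,u_1u_5,\,wq,\,qz$. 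This is outerplanar with $\delta=2$; the $2$-vertices are $a,u_2,u_4,p,r$, no two of them adjacent (no (C1)), and every vertex lying on a triangle with a $2$-vertex has degree $4$, so there is no (C2) and every ear has both endpoints of degree $4$ --- exactly your standing assumption. Starting from the ear triangle $w\,a\,w'$ (here $a'=u_1$), the first chord at $w'$ is $w'z$, but the face incident with $w'u_1$ is the quadrilateral $w'u_1u_5z$ and $d(u_1)=4$, not $2$. So the ear triangles do not ``link up around the whole of $C$,'' and everything built on that (the cyclic family $Q_0,\dots,Q_{m-1}$, the exhaustion $V(G)=\{w_i\}\cup\{a_i\}$, the deletion-of-apices step) collapses. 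The graph above does contain (C3) (e.g.\ at $u_3$), as the lemma demands, but your proof does not find it; a correct argument has to handle the case where the face next to an ear triangle is a larger face bounded by several chords, which is where the real work of Borodin--Woodall-type structural proofs lies.

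A second, lesser issue is the reduction to the $2$-connected case, which you yourself flag but do not carry out: applying the $2$-connected statement to an endblock $B$ may return a configuration whose degree-constrained vertices include the cut-vertex $c$, whose degree in $G$ exceeds its degree in $B$, and you give no mechanism for relocating the configuration away from $c$. Until both points are repaired --- a correct analysis of the face adjacent to an ear triangle when interior chords are present, and a genuine avoidance argument (or a proof that dispenses with the block decomposition, e.g.\ by induction or discharging on the whole graph) --- the proposal cannot be accepted as a proof of the lemma.
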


From the above lemma, one can see that every outerplanar graph contains either a vertex $x$ of degree 1 or an edge $xy$ with $d(x)=2$ and $d(y)\leq 4$. Thus by a same argument as in Theorem \ref{thm:girth6}, we have the following theorem for outerplanar graphs.

\begin{thm}\label{thm:girth6}
Every outerplanar graph has an equitable $t$-tree-coloring for every $t\geq 2$, that is, $va^{\equiv}_{\infty,\infty}(G)=2$ if $G$ is not a forest and $va^{\equiv}_{\infty,\infty}(G)=1$ otherwise.
\end{thm}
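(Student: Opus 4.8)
The plan is to mimic exactly the induction used in Theorem~\ref{thm:girth6}, replacing the girth-6 structural lemma (Lemma~\ref{mad3}/Corollary~\ref{lem:girth6}) by the outerplanar structural lemma (Lemma~\ref{lem:outer}). As with the planar case, Theorem~\ref{thm:girth5} — applied to $G$ viewed as a planar graph, which every outerplanar graph is — already gives an equitable $t$-tree-coloring for every $t\ge 3$, and the forest/non-forest dichotomy for $t=1$ is trivial, so the only thing to prove is the existence of an equitable $2$-tree-coloring. This I would do by induction on $|V(G)|$, the base case ($|V(G)|\le 2$) being immediate.

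For the inductive step, I would first observe (as the paragraph preceding the statement already does) that Lemma~\ref{lem:outer} hands us, in an outerplanar graph with $\delta(G)\ge 2$, either two adjacent $2$-vertices (C1), or a vertex of degree $2$ adjacent to a vertex of degree at most $4$ (from C2 or C3). Together with the trivial presence of a degree-$1$ vertex when $\delta(G)<2$, this means $G$ always contains either a $1$-vertex or an edge $xy$ with $d(x)=2$ and $d(y)\le 4$. (Here I should double check that deleting vertices from an outerplanar graph leaves it outerplanar, which is clear since outerplanarity is closed under taking subgraphs; this is what licenses the use of the induction hypothesis on $G-S$.)

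Next I would set up the deletion set $S=\{v_1,v_2\}$ exactly as in Theorem~\ref{thm:girth6}: if $G$ has a $1$-vertex $x$, put $x:=v_1$, and using the fact that the (still outerplanar) graph $G-x$ has minimum degree forcing a low-degree vertex — or more simply, since every outerplanar graph has a vertex of degree at most $2$ — choose a $2^-$-vertex $y$ of $G-x$ and put $y:=v_2$; if instead $G$ has an edge $xy$ with $d(x)=2$, $d(y)\le 4$, put $x:=v_1$, $y:=v_2$. In either case $|N_G(v_1)\setminus S|\le 1$ and $|N_G(v_2)\setminus S|\le 3=2\cdot 2-1$, so the degree hypothesis of Lemma~\ref{lem:label} with $t=2$ is met; since $G-S$ is outerplanar of smaller order, the induction hypothesis gives it an equitable $2$-tree-coloring, and Lemma~\ref{lem:label} lifts this to $G$.

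The one wrinkle — and the place where the argument is genuinely more delicate than a one-line appeal to Lemma~\ref{lem:label} — is that there is no analogue here of configuration (C2.3), the high-degree vertex with five $2$-neighbors, which in Theorem~\ref{thm:girth6} required the separate ad hoc extension argument. In the outerplanar setting Lemma~\ref{lem:outer} only ever produces a $2$-vertex next to a vertex of degree at most $4$, so the simple two-vertex deletion always suffices and no special case is needed; I would state this explicitly to reassure the reader. Thus I expect the main (minor) obstacle to be purely bookkeeping: verifying that in the case $\delta(G)\ge 2$ the configurations C2 and C3 really do give an edge $xy$ with $d(x)=2$, $d(y)\le 4$ (for C3 the relevant $y$ is the degree-$4$ vertex $w$), and confirming the degree bounds $|N_G(v_i)\setminus S|\le 2i-1$ in every subcase so that Lemma~\ref{lem:label} applies verbatim.
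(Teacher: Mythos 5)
Your treatment of the case $t=2$ is exactly the paper's intended argument: extract from Lemma~\ref{lem:outer} (plus the trivial case $\delta(G)\le 1$) either a $1$-vertex or an edge $xy$ with $d(x)=2$, $d(y)\le 4$, set $S=\{v_1,v_2\}$, check $|N_G(v_1)\setminus S|\le 1$ and $|N_G(v_2)\setminus S|\le 3$, and apply Lemma~\ref{lem:label} together with induction on the order (outerplanarity being closed under taking subgraphs). Your observation that no analogue of the special configuration (C2.3) arises here is also correct.

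However, there is a genuine gap in how you dispose of the case $t\ge 3$. You invoke Theorem~\ref{thm:girth5} ``applied to $G$ viewed as a planar graph,'' but Theorem~\ref{thm:girth5} is \emph{not} a statement about all planar graphs: it requires girth at least $5$. Outerplanar graphs may have girth $3$ or $4$ (indeed, maximal outerplanar graphs are full of triangles, and configurations (C2) and (C3) of Lemma~\ref{lem:outer} are themselves built from $3$-cycles), so the theorem simply does not apply and the colorings for $t\ge 3$ are left unproved. The repair is to run the same induction for \emph{every} $t\ge 2$ rather than only for $t=2$: take $v_1:=x$ and $v_2:=y$ as you do, and fill the remaining positions $v_t,v_{t-1},\dots,v_3$ as in the proof of Theorem~\ref{thm:girth5}, at each step choosing a vertex of degree at most $2$ in the outerplanar graph obtained by deleting the vertices already assigned higher indices (such a vertex exists because outerplanar graphs are $2$-degenerate). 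Then $|N_G(v_i)\setminus S|\le 2\le 2i-1$ for each $i\ge 3$, the hypotheses of Lemma~\ref{lem:label} hold for all $t\ge 2$, and induction on $|V(G)|$ finishes the proof without any appeal to the girth-$5$ result. This is what the paper's phrase ``by a same argument'' must be read as meaning, since its own girth-$6$ proof also cannot lend its $t\ge 3$ step to the outerplanar setting.
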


\section{Concluding remarks and open problems}

First of all, we remark that the constructive proofs of Lemma \ref{lem:geq t+1}, Theorem \ref{thm:main1} and Theorem \ref{thm:main} yield linear-time algorithms to obtain an equitable $(2\big\lfloor\frac{\lfloor\frac{-1+\sqrt{8n+9}}{2}\rfloor}{2}\big\rfloor,k,\infty)$-tree-coloring for every $k\geq 2$ and an equitable $(2\lfloor\frac{n+1}{3}\rfloor,1,1)$-tree-coloring of $K_{n,n}$. Second, we would like to point out that the bounds for $va^{\equiv}_{1,1}(K_{n,n})$ and $va^{\equiv}_{\infty, 2}(K_{n,n})$ in Theorem \ref{thm:main1} and Theorem \ref{thm:main} are sharp in general case but they are not very tight for some special graphs. The examples will be shown after Theorem \ref{add2}.

Let $G=K_{n,n}$ be a complete bipartite graph and $a$ be an integer.  Now we consider the integral solution of the following equation on two nonnegative variables $x$ and $y$
\begin{align}
ax+(a+1)y=n.\label{fc}
\end{align}
For any solution $\mathbb{z}_{i}=(x_i,y_i)$ of the equation (\ref{fc}), define $z_i=x_i+y_i$. Then we have the following theorem.

\begin{thm}\label{add1}
Let $k,d\geq 1$ be two integers.
If the equation (\ref{fc}) has two integral solutions $\mathbb{z}_{1}$ and $\mathbb{z}_{2}$ (note needed to be different), then $K_{n,n}$ has an equitable $(z_1+z_2,k,d)$-tree-coloring with the size of each color class being either $a$ or $a+1$.
\end{thm}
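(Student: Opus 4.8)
The plan is to use the two solutions $\mathbb{z}_1=(x_1,y_1)$ and $\mathbb{z}_2=(x_2,y_2)$ of~(\ref{fc}) to partition $X$ along one solution and $Y$ along the other, so that each resulting monochromatic class meets only one side of the bipartition except possibly for a few ``crossing'' colors, which we must keep acyclic and small. Concretely, I would first split the partite set $X$ into $x_1$ classes of size $a$ and $y_1$ classes of size $a+1$ (possible exactly because $ax_1+(a+1)y_1=n$), using $z_1=x_1+y_1$ fresh colors; then split $Y$ into $x_2$ classes of size $a$ and $y_2$ classes of size $a+1$, using $z_2=x_2+y_2$ further fresh colors. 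This uses $z_1+z_2$ colors in total, every color class has size $a$ or $a+1$, and every class lies entirely inside $X$ or entirely inside $Y$, hence induces an independent set — in particular a forest of maximum degree $0$, which trivially satisfies the degree bound $k\geq 1$ and the diameter bound $d\geq 1$. So the coloring is an equitable $(z_1+z_2,k,d)$-tree-coloring with each color class of size $a$ or $a+1$, which is exactly the claim.

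The only subtlety is the degenerate possibility that one of $x_i,y_i$ forces $a$ or $a+1$ not to divide things cleanly, but that cannot happen: the equation $ax_i+(a+1)y_i=n$ is precisely the statement that $n$ can be written as a sum of $x_i$ copies of $a$ and $y_i$ copies of $a+1$, so the partition of a set of size $n$ into $x_i$ blocks of size $a$ and $y_i$ blocks of size $a+1$ exists by construction. I would also note the harmless edge case $z_1+z_2=0$, which only occurs if $n=0$; for $n\geq 1$ we have $z_1,z_2\geq 1$ and the coloring is genuine.

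The main ``obstacle'' — really the only thing worth checking carefully — is the verification that we have not accidentally used the same color on both sides: I keep the two color palettes disjoint by fiat, drawing the $z_1$ colors for $X$ from $\{1,\dots,z_1\}$ and the $z_2$ colors for $Y$ from $\{z_1+1,\dots,z_1+z_2\}$. Since no color then appears in both $X$ and $Y$, no monochromatic edge exists at all, so there is certainly no monochromatic cycle and no vertex with a monochromatic neighbor; the forest/maximum-degree/diameter conditions hold vacuously. Thus the argument is a direct construction requiring no case analysis, and in particular (as remarked for Lemma~\ref{lem:geq t+1}, Theorem~\ref{thm:main1} and Theorem~\ref{thm:main}) it runs in linear time. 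I would remark afterward that this gives, for suitable $a$, strictly better colorings than the general bounds of Theorem~\ref{thm:main1} and Theorem~\ref{thm:main} for the special $n$ admitting convenient solutions of~(\ref{fc}).
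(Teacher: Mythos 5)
Your construction is exactly the paper's own proof: partition $X$ according to the solution $\mathbb{z}_1$ into $x_1$ classes of size $a$ and $y_1$ classes of size $a+1$, partition $Y$ according to $\mathbb{z}_2$ likewise, and use $z_1+z_2$ distinct colors so every class is an independent set of size $a$ or $a+1$. The argument is correct and coincides with the paper's, with your remarks on disjoint palettes and the trivial forest/diameter check being just a more explicit verification of what the paper leaves to the reader.
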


 \begin{proof}
 Let $X$ and $Y$ be the two partite sets of $K_{n,n}$. Since $(x_1,y_1)$ and $(x_2,y_2)$ are two solutions of the equation (\ref{fc}), we can partition the vertices of $X$ into $X_1,\cdots X _{x_1},X_{x_1+1},\cdots,X_{z_1}$ and partition the vertices of $Y$ into $Y_1,\cdots Y _{x_2},Y_{x_2+1},\cdots,Y_{z_2}$ so that $|X_{i}|=|Y_{j}|=a$ and $|X_{k}|=|Y_{c}|=a+1$, where $1\leq i\leq x_1$, $1\leq j\leq x_2$, $x_1+1\leq k\leq z_1$ and $x_2+1\leq c\leq z_2$.
 One can easily see that such a partition implies an equitable $(z_1+z_2,k,d)$-tree-coloring of $K_{n,n}$.
\end{proof}

On the other hand, we can prove the following theorem.

\begin{thm}\label{add2}
Let $a\geq 3$ and $q$ be two integer. If $K_{n,n}$ has an equitable $(q,1,1)$-tree-coloring with the size of each color class being either $a$ or $a+1$, then the equation (\ref{fc}) has two integral solutions $\mathbb{z}_{1}$ and $\mathbb{z}_{2}$ (note needed to be different) so that $q=z_1+z_2$.
\end{thm}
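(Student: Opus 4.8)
The plan is to reverse the construction in Theorem~\ref{add1}. Suppose $c$ is an equitable $(q,1,1)$-tree-coloring of $K_{n,n}$ with every color class of size $a$ or $a+1$, where $a\geq 3$. Let $X$ and $Y$ be the partite sets. First I would observe that since each color class induces a forest of maximum degree at most $1$ and diameter at most $1$ (so each component is a single vertex or a single edge $K_2$), and since any color class meeting both $X$ and $Y$ in two or more vertices would contain a $4$-cycle (or at least a path of length $2$, violating the diameter bound), every color class $V_i$ satisfies $\min\{|V_i\cap X|,|V_i\cap Y|\}\leq 1$. In fact the diameter-$1$ and degree-$1$ constraints force an even stronger statement: if a class meets both sides, it must be exactly one edge, i.e. $|V_i\cap X|=|V_i\cap Y|=1$, which contradicts $|V_i|=a$ or $a+1$ since $a\geq 3>2$. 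Hence \emph{every} color class lies entirely inside $X$ or entirely inside $Y$.

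Next I would count. Let the classes inside $X$ have sizes summing to $|X|=n$, and similarly for $Y$. Say there are $x_1$ classes of size $a$ and $y_1$ classes of size $a+1$ contained in $X$; then $a x_1+(a+1)y_1=n$, so $\mathbb{z}_1:=(x_1,y_1)$ is an integral (nonnegative) solution of \eqref{fc}, and the number of $X$-classes is $z_1=x_1+y_1$. Symmetrically the $Y$-classes give a solution $\mathbb{z}_2:=(x_2,y_2)$ of \eqref{fc} with $z_2=x_2+y_2$ classes. Since every one of the $q$ color classes is counted exactly once, $q=z_1+z_2$, which is the desired conclusion.

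The only real subtlety — and the step I would write most carefully — is justifying that no color class can straddle both partite sets. The naive ``$\min\{|V_i\cap X|,|V_i\cap Y|\}\leq 1$'' argument only rules out a class with $\geq 2$ vertices on each side; one still has to exclude a class with, say, $a$ vertices in $X$ and exactly $1$ in $Y$. But such a class induces a star $K_{1,a}$, which has a vertex of degree $a\geq 3>1$, violating the maximum-degree-$1$ requirement of a $(1,1)$-tree-coloring (and equally the diameter-$1$ requirement once $a\geq 2$). This is exactly where the hypothesis $a\geq 3$ (indeed $a\geq 2$ already suffices) is used, and it is the crux that makes the clean bijection with solutions of \eqref{fc} go through. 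Once that is nailed down, the rest is just the bookkeeping above. I do not expect any genuine obstacle beyond being precise about this case analysis; everything else is immediate counting.
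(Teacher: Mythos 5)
Your proof is correct and follows essentially the same route as the paper: use $a\geq 3$ to force every color class to lie entirely inside one partite set, then count the classes of size $a$ and $a+1$ within $X$ and within $Y$ to get the two solutions of (\ref{fc}) with $q=z_1+z_2$. (Only your parenthetical aside that $a\geq 2$ would already suffice is off: for $a=2$ a color class may be a single crossing edge, e.g.\ partitioning $K_{3,3}$ into a perfect matching gives $q=3$ while $z_1+z_2=2$, so $a\geq 3$ is genuinely needed, and your main argument does use it correctly.)
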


\begin{proof}
Let $X$ and $Y$ be the two partite sets of $K_{n,n}$ and let $V_1,\cdots,V_q$ be the color classes of the given equitable $(q,1,1)$-tree-coloring $c$.
Since $a\geq 3$, for every $1\leq i\leq q$ we either have $V_i\subseteq X$ or $V_i\subseteq Y$. That is to say, every color class of $c$ is an independent set. This implies that $X$ or $Y$ can be partitioned into many parts so that the size of each part is either $a$ or $a+1$. Hence the equation (\ref{fc}) has two integral solutions $\mathbb{z}_{1}$ and $\mathbb{z}_{2}$ so that $q=z_1+z_2$.
\end{proof}

We consider the graph $K_{43,43}$ for example. Now set $a=3$ in the equation (\ref{fc}). We can collect all of the integral solutions of  the equation (\ref{fc}); they are (1,10), (5,7), (9,4), (13,1). By Theorem \ref{add1}, $K_{43,43}$ has an equitable $(t,1,1)$-tree-coloring for every $22\leq t\leq 28$. By Theorem \ref{thm:main1}, $K_{43,43}$ also has an equitable $(t,1,1)$-tree-coloring for every $t\geq 28$. On the other hand, by Theorem \ref{add2}, $K_{43,43}$ has no equitable $(21,1,1)$-tree-colorings. Thus $va^{\equiv}_{1,1}(K_{43,43})=22$.

Now we take the graph $K_{65,65}$ for another example. By Theorem \ref{thm:main}, $K_{65,65}$ has an equitable $(t,\infty,2)$-tree-coloring for every $t\geq 10$. In fact, we can also construct an equitable $(9,\infty,2)$-tree-coloring $c$ of $K_{65,65}$ by letting $|c(X'_2)|=|c(Y_1)|=5$ and $|c(X_1)|=|c(X_2)|=|c(X'_1)|=|c(Y_2)|=|c(Y'_1)|=|c(Y'_2)|=0$. Thus by Lemma \ref{lem:even}, $va^{\equiv}_{\infty, 2}(K_{65,65})\leq 8$. On the other hand, if $c$ is an equitable $(7,\infty,2)$-tree-coloring of $K_{65,65}$, then without loss of generality, we can assume that $|c(X_1)|+|c(X_2)|+|c(X'_1)|+|c(X'_2)|\geq 4$ by the equation (\ref{cup}), so we have $19|c(X_1)|+18|c(X_2)|+18|c(X'_1)|+17|c(X'_2)|+|Y'_1|+|Y'_2|\geq 17(|c(X_1)|+|c(X_2)|+|c(X'_1)|+|c(X'_2)|)\geq 68>n$. However, by the equation (\ref{eq:2}) of Lemma \ref{lem:eq.iff} we shall have $19|c(X_1)|+18|c(X_2)|+18|c(X'_1)|+17|c(X'_2)|+|Y'_1|+|Y'_2|=n$, which is a contradiction. Thus $K_{65,65}$ has no equitable $(7,\infty,2)$-tree-colorings and thus we have $va^{\equiv}_{\infty, 2}(K_{65,65})=8$.

Recall the results obtained in Section 3, we have proved that $va^{\equiv}_{\infty,\infty}(G)$ is bounded by a constant if $G$ is a planar graph with girth at least 5 or an outerplanar graph. Indeed, we believe this fact holds for every planar graph.

\begin{conj}\label{conj1}
$va^{\equiv}_{\infty,\infty}(G)=O(1)$ for every planar graph $G$.
\end{conj}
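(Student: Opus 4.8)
The plan is to establish the quantitative form of the conjecture: that there is an absolute constant $c$ such that every planar graph $G$ admits an equitable $t$-tree-coloring for every $t\ge c$. Following the template of Theorem~\ref{thm:girth5} and the girth-$6$ result, I would argue by induction on $|V(G)|$, and it suffices to produce, in every planar graph on sufficiently many vertices, a set $S=\{v_1,\dots,v_t\}$ of $t$ vertices that can be ordered so that $|N_G(v_i)\setminus S|\le 2i-1$ for all $i$: then $G-S$ is a smaller planar graph, an equitable $t$-tree-coloring of $G-S$ exists by induction, and Lemma~\ref{lem:label} lifts it to $G$. The first observation is that most positions of $S$ are essentially unconstrained: a planar graph on $n$ vertices has more than $n/2$ vertices of degree at most $12$, and positive fractions of vertices of degree at most $9$, $7$, and so on, so for $c\ge 7$ one can fill the positions $v_7,\dots,v_t$ (and, with a little care, also $v_4,v_5,v_6$) greedily from the remaining low-degree vertices. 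Thus everything reduces to choosing $v_1$ (with at most one neighbour outside $S$), $v_2$ (at most three), and $v_3$ (at most five), after possibly absorbing many of their neighbours into $S$.

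I would next aim to prove the required reducibility lemma by a discharging argument that, unlike Lemmas~\ref{mad103} and~\ref{mad3}, assumes no girth bound; one uses Euler's formula in the form $\sum_{v}(d(v)-6)+\sum_{f}(2d(f)-6)=-12$, so that both vertices and faces carry charge and triangular faces must be treated explicitly. The reducible configurations here would be \emph{clusters of low-degree vertices together with their neighbourhoods} — a vertex of degree at most $4$; an edge $uv$ with $d(u)=d(v)=5$, a Wernicke-type pair that every minimum-degree-$5$ planar triangulation contains; a facial triangle of $5$-vertices; and the like — where the whole closed neighbourhood of the cluster is placed into $S$. Padding $S$ so that an absorbed vertex $w$ receives an index $i$ with $i\ge \tfrac12 d(w)$ makes the condition $|N_G(w)\setminus S|\le 2i-1$ hold automatically, since $w$ then has at least three neighbours inside the cluster.

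The step I do not see how to complete — and the reason the statement is only a conjecture — is precisely the control of \emph{high-degree} vertices adjacent to the low-degree core. With $t$ a fixed constant, an absorbed neighbour of degree $D$ would need an index at least $\tfrac12 D-1>t$ once $D$ is large, which is impossible; yet the unavoidable low-degree configurations that discharging produces (a $5$-vertex, a $(5,5)$-edge, a $(5,6)$-edge, \dots) can in principle be surrounded by vertices of arbitrarily large degree, which then can neither be absorbed into $S$ nor safely left outside it. What seems to be needed is a \emph{bounded-boundary} unavoidable set — a discharging argument yielding a reducible configuration all of whose vertices have degree bounded by a function of $c$ — and I do not know whether such a set exists. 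Failing that, one would have to abandon the peeling approach entirely and instead try to equitabilize a Chartrand--Kronk partition of $V(G)$ into three induced forests (so $c$ could be as small as $3$) by Kierstead--Kostochka-style exchanges of vertices between the three classes; the obstacle there is that moving a vertex into a forest class may create a cycle, so the auxiliary exchange structure is no longer the near-perfect-matching object that underlies proofs of equitable \emph{proper} colorings, and taming it appears to require the same structural input.

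Finally, I would record that the sharp lower bounds for $K_{n,n}$ behind Theorems~\ref{thm:main1} and~\ref{thm:main} impose no obstruction here, since $K_{n,n}$ is non-planar for all large $n$; hence an $O(1)$ bound for planar graphs remains entirely plausible, and by the results of Section~3 it already holds under a girth hypothesis, so the whole difficulty is concentrated in the triangle-rich, high-maximum-degree regime.
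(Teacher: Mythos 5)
This statement is Conjecture~\ref{conj1} of the paper: the authors do not prove it, and indeed they offer it precisely because the methods of Section~3 stop at girth~$5$ (Theorem~\ref{thm:girth5}) and girth~$6$/outerplanar (Theorem~\ref{thm:girth6}). Your write-up is therefore not a proof of the statement, and you say as much yourself; what you have produced is a correct diagnosis of why the paper's template does not extend. The gap you name is the real one. The Section~3 argument needs two things that a girth hypothesis supplies and plain planarity does not: (i) a degeneracy bound small enough to fill the low-index positions of $S$ (Corollary~\ref{cor:3-degerate} gives $3$-degeneracy, so a vertex with $|N_G(v_i)\setminus S|\le 2i-1$ can be found for every $i\ge 2$, whereas general planar graphs are only $5$-degenerate, which already breaks the choice of $v_1$ and $v_2$); and (ii) an unavoidable set of reducible configurations \emph{all of whose vertices have bounded degree} -- in Lemma~\ref{mad103} the worst vertex has degree at most $9$ and is placed at position $v_t$, which is exactly why $|N_G(v_t)\setminus S|\le 2t-1$ can be arranged. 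Without a girth bound, the unavoidable low-degree configurations (a $5$-vertex, a $(5,5)$- or $(5,6)$-edge, etc.) may be surrounded by vertices of arbitrarily large degree, which, as you observe, can neither be absorbed into a constant-size $S$ (their required index would exceed $t$) nor left outside it; and no ``bounded-boundary'' unavoidable set of the kind you ask for is known. Your fallback suggestion, equitabilizing a Chartrand--Kronk partition into three forests by exchange arguments, is likewise only sketched and faces the cycle-creation obstruction you identify.

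So the verdict is simply: genuine gap, correctly and candidly located by you, and coinciding with the reason the paper leaves this as a conjecture rather than a theorem. Two small cautions on the parts you treat as routine: the claim that positions $v_4,\dots,v_t$ can be filled greedily needs the deleted-subgraph degeneracy argument to be run in the graph obtained after removing the higher-indexed choices (as in the proof of Theorem~\ref{thm:girth5}), not a global count of low-degree vertices of $G$; and Lemma~\ref{lem:label} keeps $t$ fixed throughout the induction, so any absorption scheme must place \emph{every} vertex of the chosen configuration, including its high-degree boundary, within the same $t$ positions -- which is precisely where the constant-$t$ constraint bites.
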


From the proof of Theorem \ref{thm:main1}, one can see that there exists complete bipartite graphs for which $va^{\equiv}_{\infty,\infty}(G)=\Omega(|G|^{\frac{1}{2}})$, so the above conjecture does not hold for general graphs. However, the following conjecture may be of interest.

\begin{conj}\label{conj2}
$va^{\equiv}_{\infty,\infty}(G)\leq \lceil\frac{\Delta(G)+1}{2}\rceil$ for every graph $G$.
\end{conj}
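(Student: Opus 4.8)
\medskip
\noindent\textbf{Proof proposal.}
I would attack Conjecture \ref{conj2} by induction on $|V(G)|$, in the same spirit as the proofs of Theorems \ref{thm:girth5} and \ref{thm:girth6}. Fix a graph $G$ and an integer $t\geq\lceil\frac{\Delta(G)+1}{2}\rceil$, so that $\Delta(G)\leq 2t-1$; we must produce an equitable $t$-tree-coloring of $G$. Two ranges of $t$ are essentially free. First, if $t\geq\lceil\frac{|V(G)|}{2}\rceil$, then in any equitable $t$-coloring every color class has size $\lfloor|V(G)|/t\rfloor$ or $\lceil|V(G)|/t\rceil$, both of which are at most $2$; since a set of at most two vertices always induces a forest, any equitable $t$-coloring of $G$ is automatically an equitable $t$-tree-coloring, and such colorings exist. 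Second, if $G$ contains distinct vertices $u_1,\dots,u_t$ with $d_G(u_i)\leq 2i-1$ for every $i$, then setting $S=\{u_1,\dots,u_t\}$ and $v_i:=u_i$ we get $|N_G(v_i)\setminus S|\leq d_G(v_i)\leq 2i-1$, while $\Delta(G-S)\leq\Delta(G)\leq 2t-1$, so the induction hypothesis gives an equitable $t$-tree-coloring of $G-S$ and Lemma \ref{lem:label} lifts it to $G$. By a nested Hall argument (sort vertices by degree), such $u_1,\dots,u_t$ exist unless for some index $k$ there are at most $k-1$ vertices of degree at most $2k-1$; so we may assume $\lceil\frac{\Delta(G)+1}{2}\rceil\leq t<\lceil\frac{|V(G)|}{2}\rceil$ and that $G$ is, in this sense, ``dense everywhere''.

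The remaining dense case is where the real difficulty lies, and it is tight: for $G=K_{2t}$ every color class of a tree-coloring has size at most $2$, which forces $va^{\equiv}_{\infty,\infty}(K_{2t})=t=\lceil\frac{\Delta+1}{2}\rceil$ (and likewise $va^{\equiv}_{\infty,\infty}(K_{2t+1})=t+1$), so no slack is available. For such graphs the deletion scheme of Lemma \ref{lem:label} with $|S|=t$ cannot work directly: if $G$ is close to $(2t-1)$-regular, then a vertex placed in position $i$ needs about $2(t-i)$ of its neighbors inside $S$, which already fails for $v_1$ since $|S\setminus\{v_1\}|=t-1<2t-2$. To get past this I would try one of two routes. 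The first is to prove a ``forest analogue'' of the Hajnal--Szemer\'edi / Kierstead--Kostochka theorem directly: every graph with $\Delta\leq 2t-1$ has an equitable $t$-coloring in which each color class induces a forest. What makes this plausible is that ``each class is a forest'' is preserved under deleting edges, and a minimal counterexample (a coloring that is equitable but has one class containing a cycle, or a tree-coloring that is off balance by more than one) should be repairable by an augmenting/rotation argument on alternating structures, exactly as in the independence case, with ``destroy a monochromatic cycle'' playing the role of ``remove a monochromatic edge''. The second route is a layered reduction: delete a carefully chosen set $S$ with $|S|$ a multiple of $t$ (e.g.\ with small edge-boundary to $G-S$, or a dense set inducing most of its members' neighborhoods), color $G-S$ equitably by induction, and reinsert $S$ in $|S|/t$ rounds, one vertex per color class per round so that equitability is automatic, while choosing $S$ and the insertion order so that each reinserted vertex meets at most one vertex of its target class within any one tree-component.

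In either route the main obstacle is the interplay of the two constraints: we must break every monochromatic cycle without ever violating equitability, and breaking a cycle by recoloring a vertex can unbalance the classes while rebalancing can recreate cycles. Controlling this feedback — presumably via a potential function that simultaneously measures the excess of oversized classes and the number of monochromatic cycles, and showing it can always be strictly decreased — is the crux, and is where I expect essentially all of the work to lie; for near-complete graphs it must in particular degenerate to the trivial ``classes of size two'' solution. The easy ranges and the Lemma \ref{lem:label} reduction above dispose of everything except these dense graphs, so a successful treatment of the dense case would complete the proof of the conjecture.
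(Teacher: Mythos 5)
The statement you set out to prove is presented in the paper as an open problem (Conjecture \ref{conj2}); the paper gives no proof and only observes that the bound would be sharp because $va^{\equiv}_{\infty,\infty}(K_n)=\lceil\frac{n}{2}\rceil$. Your proposal also does not prove it, and you say so yourself; the issue is that the part you leave open is not a residual ``dense case'' but essentially the whole conjecture. The conjecture says precisely: for every $t$ with $\Delta(G)\leq 2t-1$, $G$ has an equitable $t$-tree-coloring. Your ``first route'' for the remaining case --- prove that every graph with $\Delta\leq 2t-1$ has an equitable $t$-coloring whose classes induce forests --- is a verbatim restatement of that, so the preliminary reductions have not bought a genuinely smaller target. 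Moreover, those reductions discharge only thin ranges: the regime $t\geq\lceil\frac{|V(G)|}{2}\rceil$ is trivial (classes of size at most $2$), and the Lemma \ref{lem:label} step needs $t$ vertices $u_1,\dots,u_t$ with $d_G(u_i)\leq 2i-1$, which already fails for regular or near-regular graphs of degree about $2t-1$ --- exactly the graphs for which the conjecture is hard. Your correct peripheral observations ($K_{2t}$, $K_{2t+1}$, the nested degree condition) do not touch these.

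The second, substantive gap is that the proposed mechanisms for the hard case are stated as hopes rather than arguments. You suggest an augmenting/rotation scheme ``as in the independence case'' with a potential function balancing class sizes against monochromatic cycles, but you give no reason why breaking a cycle can always be compensated without recreating one, and the analogy with Hajnal--Szemer\'edi / Kierstead--Kostochka is not automatic: the relaxed equitable result the paper builds on (Fan et al.\ \cite{Fan}) achieves $\Delta$ colors with each class a forest of maximum degree one, which is roughly twice the number of colors the conjecture demands, so halving the color count is where all the difficulty sits and none of it is addressed. In short, your write-up is a sensible research plan with correct easy cases, but it contains no proof of Conjecture \ref{conj2}, and since the paper leaves the conjecture open there is no argument of theirs that your plan could be measured against.
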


Since $va^{\equiv}_{\infty,\infty}(K_n)=\lceil\frac{n}{2}\rceil$ (this can be easily proved),
the upper bound in Conjecture \ref{conj2} is sharp if it holds.

\end{document}